\newcommand{\Prob}{\operatorname{\mathbb{P}}}
\newcommand{\E}{\operatorname{\mathbb{E}}}
\newcommand{\Var}{\operatorname{Var}}
\newcommand{\aut}{\operatorname{Aut}}
\theoremstyle{thmstyleone}%
\newtheorem{theorem}{Theorem}
\newtheorem{corollary}{Corollary}
\newtheorem{lemma}{Lemma}
\theoremstyle{thmstyletwo}%
\theoremstyle{thmstylethree}%
\begin{document}

\title[Automorphisms of random trees]{The distribution of the number of automorphisms of random trees}

\author[1]{\fnm{Christoffer} \sur{Olsson}}\email{christoffer.olsson@math.uu.se,\\ ORCID: 0000-0002-0864-0497}

\author[1]{\fnm{Stephan} \sur{Wagner}}\email{stephan.wagner@math.uu.se,\\ ORCID: 0000-0001-5533-2764}

\affil[1]{\orgdiv{Department of Mathematics}, \orgname{Uppsala Universitet}, \orgaddress{\country{Sweden}}}

\abstract{
We study the size of the automorphism group of two different types of random trees: Galton--Watson trees and rooted P\'olya trees. In both cases, we prove that it asymptotically follows a log-normal distribution and provide asymptotic formulas for mean and variance of the logarithm of the size of the automorphism group. While the proof for Galton--Watson trees mainly relies on probabilistic arguments and a general result on additive tree functionals, generating functions are used in the case of rooted P\'olya trees. We also show how to extend the results to some classes of unrooted trees.
}

\keywords{random tree, Galton--Watson tree, P\'olya tree, unrooted tree, automorphism group, moments, central limit theorem}

\maketitle

\let\svthefootnote\thefootnote
\let\thefootnote\relax\footnotetext{An earlier version of this paper has previously been published as the extended abstract \textit{Automorphisms of random trees} \cite{olsson_et_al:LIPIcs.AofA.2022.16}.}
\let\thefootnote\svthefootnote


\section{Introduction}
The automorphism group is a fundamental object associated with a graph as it encodes information about its symmetries. Furthermore, counting mathematical objects up to symmetry is a classical subject in combinatorics which naturally relates to the automorphism group. An example is the case of graphs, where the number of different labelings of a graph $G$ of order $n$ is given by $\frac{n!}{\vert\aut{G}\vert}$. In this paper we study properties of the automorphism groups associated with random trees, in particular Galton--Watson trees and P\'olya trees. We show that the size of the automorphism group follows a log-normal distribution with parameters depending on tree type. The size of the automorphism group has previously been studied in special cases of Galton--Watson trees: binary trees (expected values and limiting distribution: \cite{MR2582703}), labeled trees (limiting distribution: \cite{MR4472288} and expected value: \cite{LYUThesis}), binary and ternary trees (expected values: \cite{MR1170829} and \cite{MR1392319}). It has also been studied for some other types of trees than those considered here: specifically, random recursive trees (expected value: \cite{DMatthewsThesis}), and $d$-ary increasing trees (limiting distribution and moments: \cite{MR3984050}). We are primarily studying rooted trees but for some classes of trees we can extend the results to the unrooted case. The book \cite{DrmotaRandTrees} is a general reference to this introduction and the different types of random trees discussed in this paper.

Recall now that a \textit{Galton--Watson tree} is a growth model where we start with one vertex, the root, and the number of children it has is given by a (discrete) random variable $\xi$, supported on some subset of the non-negative integers that includes at least 0 and some number greater than 1. The tree grows by letting each of the vertices have children of their own according to the offspring distribution $\xi$, independently of all other vertices. Different distributions for $\xi$ give rise to different types of Galton--Watson trees. We are especially interested in the case of \textit{critical} Galton--Watson trees, for which $\E \xi = 1$, as well as \textit{conditioned} Galton--Watson trees where we condition on the size of the tree, i.e., we pick one of all possible Galton--Watson trees on $n$ vertices at random. A related notion is that of the \textit{size-biased} Galton--Watson tree, which has two different types of vertices. The \textit{normal} vertices have the same offspring distribution $\xi$ as before, while the \textit{special} vertices get offspring according to the size-biased distribution $\hat{\xi}$ defined by $\Prob(\hat{\xi}=k) = k\Prob(\xi=k)$. We start the growth process with the root being special, and for each special vertex we choose exactly one of its children, uniformly at random, to be special as well. This means that the size-biased Galton--Watson tree has an infinite \textit{spine} of special vertices, with non-biased unconditioned Galton--Watson trees attached to it. Conditioned Galton--Watson trees are closely connected to, and a special case of, \textit{simply generated families} of trees (or \textit{simple trees}) which are defined in terms of generating functions. For a sequence of non-negative numbers $\{w_k\}$ define
\begin{equation*}
    \Phi(z) = \sum_{k\geq 0} w_k x^k 
\end{equation*}
to be its \textit{weight generating function}. Then the generating function for the class of trees associated with $\{w_k\}$,
\begin{equation*}
    T(x) = \sum_{T\in\mathcal{T}} w(T) x^{\vert T\vert} 
\end{equation*}
is defined by the functional equation
\begin{equation}
\label{eq:SimpleFuncEq}
    T(x) = x\Phi(T(x)) .
\end{equation}
The number $w(T)$ is called the \textit{weight} of the tree $T$. Under the (mild) assumption that there exists a positive $\tau$ within the radius of convergence of $\Phi(z)$ such that
\begin{equation*}
    \Phi(\tau) = \tau\Phi'(\tau) < \infty, 
\end{equation*}
we can find $\rho=\frac{\tau}{\Phi(\tau)}$ such that $T(x)$ has the singular expansion
\begin{equation}
\label{eq:GWSingExp}
    T(x) = \tau - c_1\sqrt{1-\frac{x}{\rho}} + \sum_{k\geq 2} (-1)^k c_k \left(1-\frac{x}{\rho}\right)^{\frac{k}{2}} ,
\end{equation}
for constants $c_k$ that can be calculated. Through the process of singularity analysis, this implies that the total weight of all trees of size $n$ is asymptotic to
\begin{equation*}
     C n^{-3/2}\rho^{-n} .
\end{equation*}
We take the probability of picking a given simple tree $S$ of size $n$ to be
\begin{equation}
\label{eq:ProbSimple}
    \frac{w(S)}{\sum_{\vert T\vert=n} w(T)} .
\end{equation}

We can see Galton--Watson trees and simple trees as two sides of the same coin, one being probabilistic and the other being combinatorial, where Galton--Watson trees correspond to simply generated trees with weights $w_k$ adding up to 1. In this context, the numbers $w_k$ correspond to the probability of a vertex having $k$ children, $w(T)$ is the probability of obtaining $T$ through the Galton--Watson growth process and \eqref{eq:ProbSimple} is the probability when we condition on the size of the tree. In fact, if we can find a $\tau$ as above, we can always assume that our trees, whether they are conditioned Galton--Watson or simply generated ones, are critical Galton--Watson trees as long as we can perform slight modifications (which will not affect the probabilities of individual trees) to the offspring distribution. Then, the critical Galton--Watson trees are those simple trees having their dominant singularity at $\rho=1$, so that the discussion above indicates that the probability of an (unconditional) Galton--Watson tree having size $n$ decays like $Cn^{-3/2}$. Examples of Galton--Watson (and simply generated) trees are plane trees, labeled trees, $d$-ary trees, etc. 

P\'olya trees are unordered, unlabeled trees which can be either rooted or unrooted. Rooted Pólya trees have many properties similar to Galton--Watson trees, but they do not satisfy the definition and cannot be interpreted as growth processes so we will need other methods to deal with them. They can be characterized by their generating function $P(x) = \sum_{T\in\mathcal{P}} x^{\vert T\vert}$, which satisfies
\begin{equation}
\label{eq:PolyaOrdFuncEq}
    P(x) = x\exp\left(\sum_{k=1}^\infty \frac{P(x^k)}{k} \right) .
\end{equation}
The number of such trees of size $n$ is asymptotic to $A n^{-3/2}\rho_p^{-n}$, where $\rho_p= 0.33832\ldots$ is the dominant singularity of $P(x)$ and $A$ is a constant. For this singularity, we have $P(\rho_p)=1$.

A classical result gives a bijection between Pólya trees and the union of unrooted unlabeled trees together with pairs of distinct Pólya trees. The bijection translates into the functional equation 
\begin{equation}
\label{eq:ClassicalUnrooted}
    U(x) = P(x) - \frac{1}{2} P(x)^2 + \frac{1}{2}P(x^2)
\end{equation}
that describes the generating function for unrooted trees $U(x)$ in terms of $P(x)$. The number of unrooted Pólya trees of size $n$ is asymptotic to $B n^{-5/2}\rho_p^{-n}$ for a constant $B$.

We use $\mathcal{T}$ to denote Galton--Watson trees, $\mathcal{T}_n$ to denote conditioned Galton--Watson trees on $n$ vertices and $\hat{\mathcal{T}}$ to denote size-biased trees. Similarly, we use $T$, $T_n$ and $\hat{T}$ to denote specific realizations of the respective trees. Furthermore, we will use $\mathcal{P}$ and $\mathcal{P}_n$ to denote rooted P\'olya trees as well as P\'olya trees of size $n$, respectively and, sometimes, $\mathcal{U}$ and $\mathcal{U}_n$ in the case of unrooted trees. We let $\textrm{mult}(B)$ be the number of occurrences of a particular tree $B$ as root branches of some other tree. Note that the isomorphism classes of Galton--Waton trees are rooted Pólya trees. In addition to using $w(T)$ for the weight of a simple tree, we will use $W(B)$ to denote the weight of the entire isomorphism class $B$.

\subsection{Results}
In this paper, we will show asymptotic normality of $\log\vert\aut{\mathcal{T}_n}\vert$, for various classes of random trees. This implies asymptotic log-normality of $\vert\aut{\mathcal{T}_n}\vert$.
We prove the following theorem on the automorphism group of Galton--Watson trees.
\begin{theorem}
\label{thm:MainThmGW}
 Let $\mathcal{T}_n$ be a conditioned Galton--Watson tree of order $n$ with offspring distribution $\xi$, where $\E\xi=1$, $0<\Var\xi<\infty$ and $\E\xi^5<\infty$. Then there exist constants $\mu$ and $\sigma^2\geq 0$, depending on $\mathcal{T}$, such that
 \begin{equation*}
     \frac{\log\vert\aut{\mathcal{T}_n}\vert-\mu n}{\sqrt{n}} \xrightarrow[]{d} \mathrm{N}(0, \sigma^2).
 \end{equation*}
\end{theorem}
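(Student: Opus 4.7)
The plan is to exploit the multiplicative structure of automorphism groups in order to rewrite $\log|\aut(T)|$ as an additive tree functional. If the branches at the root of a rooted tree $T$ fall into isomorphism classes $B_1, B_2, \ldots$ with multiplicities $n_1, n_2, \ldots$, then the standard decomposition of the automorphism group gives $|\aut(T)| = \prod_i n_i! \cdot |\aut(B_i)|^{n_i}$. Taking logarithms and iterating this identity over the root of every fringe subtree yields
\begin{equation*}
\log|\aut(T)| = \sum_{v \in T} f(T_v), \qquad f(T) := \sum_i \log(n_i!),
\end{equation*}
where $T_v$ denotes the fringe subtree rooted at $v$ and the toll $f$ depends only on the multiset of isomorphism classes of root branches of its argument. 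In particular $f$ is a depth-$1$ local functional, and $\log|\aut(\mathcal{T}_n)|$ is a standard additive functional on $\mathcal{T}_n$.

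With the quantity of interest expressed in this form, the next step is to invoke a general central limit theorem for additive functionals on conditioned Galton--Watson trees, such as the theorem of Janson on fringe subtrees and additive functionals. Under moment conditions on the toll $f$ against the unconditional tree $\mathcal{T}$ (typically a second moment bound on $f(\mathcal{T})$ together with mild integrability involving the subtree size), such a theorem delivers asymptotic normality with mean $\mu n$ and variance $\sigma^2 n$, where $\mu$ is essentially $\E f(\mathcal{T})$ and $\sigma^2$ arises from a more delicate limit that naturally involves the size-biased tree $\hat{\mathcal{T}}$ and the joint law of $f$ along the spine.

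The main obstacle, as usual, lies in verifying these moment hypotheses for the specific toll $f$. Fortunately $f$ is very well behaved: the crude bound $0 \leq f(T) \leq \log(\deg(\mathrm{root})!)$ reduces all required integrability questions to moments of the offspring distribution $\xi$, and the hypothesis $\E\xi^5 < \infty$ provides substantially more than enough. Once the general CLT applies, $\mu$ and $\sigma^2 \geq 0$ are identified as the constants produced by the theorem; the statement allows $\sigma^2 = 0$ so one does not need to argue positivity of the variance, although in typical cases it can be seen by observing that $f$ is non-constant under the unconditional Galton--Watson law as soon as $\xi$ charges integers $\geq 2$, which is guaranteed by $\Var\xi > 0$ and $\E\xi = 1$.
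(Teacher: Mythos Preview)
Your additive-functional setup is correct, but the claim that $f$ is a ``depth-$1$ local functional'' is false, and this is where the argument collapses. Knowing only the root and its children tells you the root degree, but to compute the multiplicities $n_i$ you must decide which root branches are \emph{isomorphic to one another}, and that requires seeing the branches in full. For instance, if the root has two children each heading a large subtree, then $f$ equals $\log 2$ or $0$ according as those two subtrees are isomorphic or not---information that is completely invisible at depth~$1$. Consequently Janson's CLT for local (finite-radius) tolls does not apply, and the crude bound $f(T)\le\log(\deg(\mathrm{root})!)$, while correct, does not by itself place $f$ under the non-local hypotheses of that theorem either: those hypotheses couple $f$ to the size of the fringe subtree, not merely to the root degree, and you have not checked them.

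This is precisely the obstacle the paper has to overcome. The toll $f$ is not local but \emph{almost local}: its value is determined with high probability by the first $M$ levels of the tree, because two independent Galton--Watson branches that agree up to level $M$ yet differ thereafter occur with probability at most $c^{M}$ for some $c<1$. The paper establishes this exponential decay for unconditioned, conditioned, and size-biased trees (Lemmas~\ref{lemma:cut-offUncondGW} and~\ref{lemma:cut-offCondGW}, Corollaries~\ref{cor:uncondGWiso} and~\ref{cor:sizeBiasIso}) and then feeds these bounds into the almost-local CLT of Ralaivaosaona, \v{S}ileikis and Wagner (Theorem~\ref{thm:almostlocal}), taking $\alpha=2$ and $p_M$ of order $c^{M}$; the fifth-moment assumption $\E\xi^5<\infty$ enters exactly here to guarantee $\E\xi^{2\alpha+1}<\infty$. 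Those decay estimates are the real technical content of the proof, and your proposal omits them entirely.
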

The condition on $\E\xi^5$ is needed for technical purposes and is valid for combinatorially significant examples such as labeled trees, plane trees and $d$-ary trees. The exponent $5$ is probably not best possible, but required to apply the general result on additive functionals that our proof is based on.

The mean constant $\mu$ and even more so the variance constant $\sigma^2$ do not seem easy to compute numerically in general. We show how to derive the numerical values for some classes of trees, namely labeled trees as well as general Galton--Watson trees with bounded degrees. Numerical estimates for some types of trees can be found in Table \ref{table:NumConst}. 

\begin{table*}[b] 
    \centering
    \begin{tabular}{|c|c|c|} \hline
        Class of tree & $\mu$ & $\sigma^2$  \\ \hline
        Labeled trees & 0.0522901 & 0.0394984 \\
        Full binary trees & 0.0939359 & 0.0252103 \\
        Pruned binary trees & 0.0145850 & 0.0084835 \\ \hline
        Pólya trees & 0.1373423 & 0.1967696  \\\hline
    \end{tabular}
    \caption{Numerical estimates of the mean and variance constants for some types of trees}
    \label{table:NumConst}
\end{table*}

Note that it is unclear what an unrooted version of a Galton--Watson tree is in general so we cannot expect an unrooted version of Theorem \ref{thm:MainThmGW}, but in the case of labeled trees, the result for rooted trees translates to the case of unrooted trees as well. 
\begin{theorem}
Let $\mathcal{T}_n$ be a uniformly random unrooted labeled tree of size $n$. Then, $\E(\log\vert\aut{\mathcal{T}_n}\vert) = \mu n + O(1)$ and $\Var(\log\vert\aut{\mathcal{T}_n}\vert) = \sigma^2 n + O(1)$, with $\mu=0.0522901\ldots$ and $\sigma^2=0.0394984\ldots$. Furthermore, we have
 \begin{equation*}
     \frac{\log\vert\aut{\mathcal{T}_n}\vert-\mu n}{\sqrt{n}} \xrightarrow[]{d} \mathrm{N}(0, \sigma^2).
 \end{equation*}
\end{theorem}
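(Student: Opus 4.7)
The plan is to deduce the unrooted statement from the rooted version of Theorem~\ref{thm:MainThmGW} (applied to Poisson$(1)$ Galton--Watson trees, which are exactly the uniformly random rooted labeled trees) via the classical correspondence between rooted and unrooted labeled trees: each unrooted tree on $n$ vertices admits exactly $n$ rootings, so a uniform random rooted labeled tree $(T^r_n, v^r_n)$ has its underlying unrooted graph distributed as $\mathcal{T}_n$ and, conditionally on this tree, $v^r_n$ is uniform over the vertices. The bridge between rooted and unrooted is the orbit--stabilizer identity
\begin{equation*}
\log|\aut{T}| = \log|\aut{(T,v)}| + \log|O_T(v)|,
\end{equation*}
valid for every vertex $v$, where $\aut{(T,v)}$ is the stabilizer of $v$ (equivalently the automorphism group of $T$ rooted at $v$) and $O_T(v)$ is the orbit of $v$ under $\aut{T}$.

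For the CLT, substituting $(T^r_n, v^r_n)$ in the identity and dividing by $\sqrt n$, the first term on the right converges in distribution to $\mathrm{N}(0,\sigma^2)$ by Theorem~\ref{thm:MainThmGW}, while $|O_T(v)| \leq n$ forces the orbit correction to satisfy $\log|O_{T^r_n}(v^r_n)| \leq \log n = o(\sqrt n)$, so Slutsky's theorem yields convergence of the unrooted quantity. For the mean, taking expectations gives $\E[\log|\aut{\mathcal{T}_n}|] = \mu n + O(1) + \E[\log|O_{T^r_n}(v^r_n)|]$, and applying $\log x \leq x-1$ followed by averaging over the uniform root reduces matters to showing
\begin{equation*}
\E\Big[\sum_{O} |O|^2\Big] = O(n),
\end{equation*}
where the sum ranges over the orbits of $\aut{\mathcal{T}_n}$. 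The left side counts ordered pairs of vertices in the same orbit; the dominant contribution comes from pairs of sibling leaves, bounded at $O(n)$ by a moment computation on the Poisson$(1)$ offspring distribution, with deeper symmetric configurations of smaller order.

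The main obstacle is the variance estimate $\Var[\log|\aut{\mathcal{T}_n}|] = \sigma^2 n + O(1)$. The naive decomposition combined with Cauchy--Schwarz on the covariance between $\log|\aut{(T^r_n, v^r_n)}|$ and $\log|O_{T^r_n}(v^r_n)|$ yields only $O(\sqrt n)$ error, since the former has variance of order $n$ while the latter is only bounded in second moment (which one verifies using $(\log x)^2 \leq Cx$ for $x\geq 1$ together with the same orbit-pair bound as for the mean). To sharpen this to $O(1)$, I would either redo the additive-functional analysis underlying Theorem~\ref{thm:MainThmGW} directly on the unrooted log-automorphism functional---viewing it, via the centroid identity $\log|\aut{T}| = \log|\aut{(T,c(T))}| + \varepsilon(T)$ with $\varepsilon(T) \in \{0, \log 2\}$, as a rooted functional at a centroid plus a deterministically bounded correction---or establish that the covariance of the rooted functional with the orbit term is $O(1)$ via a local-limit-type argument combined with higher moment control on $|O_{T^r_n}(v^r_n)|$; either route requires substantially more care than the mean and CLT arguments.
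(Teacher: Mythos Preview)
Your argument for the central limit theorem is exactly the paper's: couple a uniform rooted labeled tree to the unrooted one via a uniform root, apply the orbit--stabilizer theorem to bound the difference of the normalized log-automorphism-counts by $\log n/\sqrt{n}$, and finish with Slutsky. That part matches perfectly.

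Where you diverge is in the treatment of the moment estimates. The paper's own proof in Section~\ref{sec:Unrooted} is in fact very brief: it establishes only the distributional convergence via the argument above and does not separately justify the $O(1)$ error terms for mean and variance of the unrooted quantity. Strictly speaking, the deterministic inequality $0\le \log|\aut T|-\log|\aut T_v|\le \log n$ yields only $\E[\log|\aut\mathcal{T}_n|]=\mu n+O(\log n)$ and similarly for the variance, so you are right to seek a finer argument. Your mean bound via $\E\bigl[\sum_O|O|^2\bigr]=O(n)$ is a sensible route, though the claim that ``deeper symmetric configurations are of smaller order'' needs a genuine argument; and you correctly identify that the $O(1)$ variance bound is the real difficulty, with neither of your proposed workarounds fully carried out. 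In short: on the part the paper actually proves you agree with it, and on the part it glosses over you go further but remain incomplete.
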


We can also prove asymptotic log-normality for the size of the automorphism group of Pólya trees.
\begin{theorem}
\label{thm:MainThmP}
Let $\mathcal{P}_n$ be a uniformly random P\'olya tree of order $n$, rooted or unrooted. Then, $\E(\log\vert\aut{\mathcal{P}_n}\vert) = \mu n + O(1)$ and $\Var(\log\vert\aut{\mathcal{P}_n}\vert) = \sigma^2 n + O(1)$, with $\mu=0.1373423\ldots$ and $\sigma^2=0.1967696\ldots$. Furthermore, we have
 \begin{equation*}
     \frac{\log\vert\aut{\mathcal{P}_n}\vert-\mu n}{\sqrt{n}} \xrightarrow[]{d} \mathrm{N}(0, \sigma^2).
 \end{equation*}
\end{theorem}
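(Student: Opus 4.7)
The starting observation is that $\log\vert\aut(T)\vert$ is an additive tree functional. Indeed, if $T$ has root branches grouping into isomorphism classes $B_1,B_2,\dots$ with multiplicities $m_1,m_2,\dots$, the recursive identity
$$\vert\aut(T)\vert=\prod_i m_i!\cdot\vert\aut(B_i)\vert^{m_i}$$
iterates to give $\log\vert\aut(T)\vert=\sum_{v\in T} f(T_v)$, where $T_v$ denotes the fringe subtree at $v$ and the local toll is
$$f(T)=\sum_i\log(m_i!),$$
i.e.\ the logarithm of the product of multiplicity-factorials at the root of $T$. Theorem \ref{thm:MainThmP} is thus a central limit theorem for a particular additive tree functional on random P\'olya trees.

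My approach would be the bivariate generating function method. Introduce
$$Q(x,u)=\sum_{T\in\mathcal{P}} x^{\vert T\vert}\,u^{\log\vert\aut(T)\vert},$$
so that $Q(x,1)=P(x)$, and decompose a P\'olya tree into a root plus its multiset of branches. Collecting $m$ isomorphic copies of a branch $B$ produces a factor $(m!)^{\log u}\bigl(x^{\vert B\vert}u^{\log\vert\aut(B)\vert}\bigr)^m$, which yields the product representation
$$Q(x,u)=x\prod_{B\in\mathcal{P}}\sum_{m\geq 0}(m!)^{\log u}\bigl(x^{\vert B\vert}u^{\log\vert\aut(B)\vert}\bigr)^m,$$
reducing to \eqref{eq:PolyaOrdFuncEq} at $u=1$. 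Expanding $(m!)^{\log u}=1+(\log u)\log m!+\tfrac{1}{2}(\log u)^2(\log m!)^2+\cdots$ around $u=1$ gives a perturbation of the P\'olya functional equation. The plan is then to perform a standard perturbative singularity analysis: show that for $u$ in a complex neighbourhood of $1$ the dominant singularity $\rho(u)$ of $Q(\cdot,u)$ is analytic in $u$ with $\rho(1)=\rho_p$, and that $Q(x,u)$ retains a square-root singular expansion with analytic coefficients uniformly in $u$. Hwang's quasi-power theorem then delivers the required asymptotic normality of $[x^n]Q(x,u)/[x^n]P(x)$, together with $\E(\log\vert\aut(\mathcal{P}_n)\vert)=\mu n+O(1)$ and $\Var(\log\vert\aut(\mathcal{P}_n)\vert)=\sigma^2 n+O(1)$ for $\mu=-\rho'(1)/\rho(1)$ and $\sigma^2$ computed from the second-order $u$-derivative of the singular expansion at $u=1$. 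The numerical values of $\mu$ and $\sigma^2$ in Table \ref{table:NumConst} would then be obtained by evaluating these derivatives via the functional equation for $P(x)$ at $x=\rho_p$.

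For the unrooted case, I would upgrade the dissimilarity identity \eqref{eq:ClassicalUnrooted} to a bivariate version linking the unrooted generating function, weighted by $u^{\log\vert\aut\vert}$, to $Q(x,u)$ and a companion series tracking the contribution of a symmetric central edge (which introduces an additional factor of $u^{\log 2}$ in $\vert\aut\vert$). Passing from an unrooted tree to its rooting at the centroid shifts $\log\vert\aut\vert$ by at most $O(\log n)$, which is negligible on the $\sqrt n$ scale, so the same $\mu$ and $\sigma^2$ appear; the change from $n^{-3/2}$ to $n^{-5/2}$ in the counting asymptotics is absorbed into the normalising constants.

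The main obstacle is the perturbative singularity analysis of $Q(x,u)$: the defining relation is an infinite product rather than a clean fixed-point equation $Q(x,u)=x\Phi(Q(x,u),u)$, so the usual Drmota--Lalley--Woods machinery does not apply directly. One must instead work with the product representation and control the contributions of the higher-order terms in the $(\log u)$-expansion uniformly in a neighbourhood of the singularity, which is likely where the bulk of the technical work lies. A secondary issue is to verify non-degeneracy, $\sigma^2>0$, to rule out a degenerate Gaussian limit; this amounts to checking that $\log\vert\aut(\mathcal{P}_n)\vert$ is not too concentrated around its mean, which in turn requires showing that the second derivative in $u$ of the singular part of $Q(x,u)$ does not vanish at $u=1$.
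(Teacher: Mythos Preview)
Your overall framework---bivariate generating function, additive-functional structure, and a bivariate dissimilarity identity for the unrooted case---matches the paper's, and your plan to extract $\mu$ and $\sigma^2$ by differentiating at $u=1$ (equivalently $t=0$) is essentially what the paper carries out in Section~\ref{sec:RootedPolya}. The unrooted sketch (upgrade \eqref{eq:ClassicalUnrooted} bivariately; centroid rooting changes $\log\vert\aut\vert$ by at most $O(\log n)$) is also close to one of the two arguments the paper gives in Section~\ref{sec:Unrooted}.

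There is, however, a genuine gap in the central-limit step. Your plan is to show that $Q(x,u)$ admits a square-root singular expansion with dominant singularity $\rho(u)$ analytic in a complex neighbourhood of $u=1$, and then to invoke the quasi-power theorem. This cannot work as stated: $Q(x,u)$ has radius of convergence \emph{zero} in $x$ as soon as $u>1$ is real. The star on $n$ vertices alone contributes the term $(n-1)!^{\,\log u}x^n$, and $\sum_n (n-1)!^{\,t}x^n$ diverges for every $t=\log u>0$. Hence there is no singularity $\rho(u)$ to track on that side of $u=1$, and no uniform singular expansion in a full neighbourhood; the hypotheses of Hwang's theorem are not available. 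The paper isolates precisely this obstruction and gets around it by a truncation: replace the toll $f(T)=\sum_i\log(m_i!)$ by the cut-off $f^{\le N}(T)=\sum_i I(m_i\le N)\log(m_i!)$. The associated bivariate generating function $P^{\le N}(x,t)$ \emph{is} analytic in the required region (since $F^{\le N}(T)=O(\vert T\vert)$), so Drmota's functional CLT yields asymptotic normality for each fixed $N$. One then shows, via singularity analysis of $P^{>N}_t$ and $P^{>N}_{tt}$ at $t=0$, that $\Var\bigl(F(\mathcal P_n)-F^{\le N}(\mathcal P_n)\bigr)/n\to 0$ as $N\to\infty$ uniformly in $n$, and an approximation lemma (Lemma~\ref{lemma:approx}) transfers the CLT from the truncated functionals to $\log\vert\aut(\mathcal P_n)\vert$. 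Your final paragraph senses that the perturbative analysis is the crux, but ``controlling the higher-order terms in the $(\log u)$-expansion'' cannot be made rigorous without some device of this kind, because the bivariate series you propose to analyse simply fails to exist for $u$ on the wrong side of $1$.
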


The proofs of Theorem \ref{thm:MainThmGW} and Theorem \ref{thm:MainThmP} rely at their cores on the same idea of approximating the additive functionals by simpler ones, but they are fairly different at a glance. We give some preliminary results in Section \ref{sec:prel}. We then prove Theorem \ref{thm:MainThmGW} in Section \ref{sec:GW} and Theorem \ref{thm:MainThmP} for rooted trees in Section \ref{sec:RootedPolya}. The results for unrooted trees are proved in Section \ref{sec:Unrooted}.

\section{Preliminaries}
\label{sec:prel}

For any rooted tree $T$, we have a recursive formula for the size of its automorphism group. Let $T_1, T_2, \ldots, T_k$ be its root branches up to isomorphism, having multiplicities $m_1, m_2, \ldots, m_k$, respectively. Then we have
\begin{equation}
\label{eq:AutRecursive}
    \vert\aut{T}\vert = \prod_{i=1}^k m_i! \vert\aut{T_i}\vert^{m_i} , 
\end{equation}
derived from the fact that the automorphism group of a rooted tree is obtained from symmetric groups by iterated direct and wreath products (see \cite{MR1373683}, Proposition 1.15). In other words, the tree is invariant under the automorphisms of each of the root branches as well as under permutation of isomorphic branches. By taking logarithms, we find that $\log\vert\aut{T}\vert$ is an \textit{additive functional} of the tree, which is a real-valued function $F(T)$ that satisfies
\begin{equation*}
    F(T) = f(T) + \sum_{i=1}^r F(S_i) , 
\end{equation*}
where we sum over the $r$ (possibly isomorphic) root branches $S_1,S_2,\ldots,S_r$ and $f(T)$ is a function called the \textit{toll} of the additive functional. From \eqref{eq:AutRecursive}, we see that $\log\vert\aut{T}\vert$ has the equivalent form
\begin{equation*}
    F(T) = f(T) + \sum_{i=1}^k m_i F(T_i) , 
\end{equation*}
where the sum is over root branches up to isomorphism. In this case the toll function is $f(T) = \sum \log(m_i!)$. 

Limit theorems for additive functionals have been proven for various classes of random trees under different conditions, see \cite{MR3311217, MR3318048, MR3432572, MR3984050, MR3690263, almostLocal_2020}. In the case of Galton--Watson trees, we will specifically make use of a general result on \emph{almost local} additive functionals due to Ralaivaosaona, \v{S}ileikis and the second author \cite{almostLocal_2020}, which is in turn based on earlier work by Janson \cite{MR3432572}. Intuitively, ``almost local'' means that looking at the first $M$ levels of the tree gives us substantial (albeit not perfect) information about the value of the toll function at the root. We will let $\mathcal{T}^{(M)}$ denote the restriction of a Galton--Watson tree to its first $M$ levels, where the root is at level 0, with similar definitions for the other classes of trees. The theorem we will use is the following. 

\begin{theorem}[\cite{almostLocal_2020}]
\label{thm:almostlocal}
 Let $\mathcal{T}_n$ be a conditioned Galton--Watson tree of order $n$ with offspring distribution $\xi$, with $\E\xi=1$ and $0<\sigma^2:=\Var\xi<\infty$. Assume further that $\E\xi^{2\alpha + 1}<\infty$ for some integer $\alpha\geq0$. Consider a functional $F$ of finite rooted ordered trees with the property that 
 \begin{equation*}
     f(T) = O(\mathrm{deg}(T)^\alpha), 
 \end{equation*}
 where $f$ is the toll function associated with the functional.
 
Furthermore, assume that there exists a sequence $(p_M)_{M\geq1}$ of positive numbers with $p_M\to 0$ as $M\to\infty$, such that
\begin{itemize}
     \item for every integer $M\geq 1$, 
     \begin{equation*}
         \E\left\vert f(\hat{\mathcal{T}}^{(M)})-\E\left(f(\hat{\mathcal{T}}^{(N)})\vert\hat{\mathcal{T}}^{(M)}\right)\right\vert \leq p_M , 
     \end{equation*}
     for all $N\geq M$, 
     \item there is a sequence of positive integers $(M_n)_{n\geq1}$ such that  for large enough $n$, 
     \begin{equation*}
         \E\vert f(\mathcal{T}_n) - f(\mathcal{T}_n^{(M)})\vert \leq p_{M_n}.
     \end{equation*}
 \end{itemize}
 If $a_n = n^{-1/2}(n^{\max\{\alpha, 1\}}p_{M_n}+ M_n^2)$ satisfies
     \begin{equation*}
         \lim_{n\to\infty} a_n=0, \textrm{ and }\sum_{n=1}^\infty \frac{a_n}{n} < \infty , 
     \end{equation*}
     then
     \begin{equation*}
         \frac{F(\mathcal{T}_n) - \mu n}{\sqrt{n}} \xrightarrow[]{d} N(0, \gamma^2) , 
     \end{equation*}
     where $\mu = \E f(\mathcal{T})$ and $0\leq \gamma^2<\infty$.
\end{theorem}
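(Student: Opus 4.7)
The plan is to proceed by a truncation-and-approximation argument, reducing the general almost-local case to a known central limit theorem for strictly local additive functionals on conditioned Galton--Watson trees, and then passing to the limit by controlling the truncation error.

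First, for each integer $M \geq 1$, introduce the truncated toll $f^{(M)}(T) := f(T^{(M)})$, which depends only on the first $M$ levels of $T$, and the corresponding additive functional $F^{(M)}$. Because $f^{(M)}$ is strictly local (depth $M$), one can invoke a CLT for local additive functionals of conditioned Galton--Watson trees (e.g.\ the Janson local result on which the theorem is built). This yields, for each fixed $M$,
\begin{equation*}
    \frac{F^{(M)}(\mathcal{T}_n) - \mu_M n}{\sqrt{n}} \xrightarrow{d} N(0, \gamma_M^2),
\end{equation*}
with $\mu_M = \E f(\mathcal{T}^{(M)})$ and an explicit $\gamma_M^2$ arising from the joint fringe distribution at two random vertices on the spine.

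Second, use the first almost-locality hypothesis to show that $\mu_M$ and $\gamma_M^2$ converge as $M \to \infty$. The increments $\mu_{M+1}-\mu_M$ are differences of conditional expectations along a filtration on the size-biased tree $\hat{\mathcal{T}}$, and the assumption $\E|f(\hat{\mathcal{T}}^{(M)}) - \E(f(\hat{\mathcal{T}}^{(N)})\mid\hat{\mathcal{T}}^{(M)})| \leq p_M$ with $p_M \to 0$ gives a Cauchy-type bound. A parallel (two-vertex) argument handles $\gamma_M^2$, defining the limiting constants $\mu$ and $\gamma^2$.

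Third, and this is where the main work lies, estimate the $L^1$-error between the full and truncated functionals on $\mathcal{T}_n$. Write
\begin{equation*}
    F(\mathcal{T}_n) - F^{(M_n)}(\mathcal{T}_n) = \sum_{v \in \mathcal{T}_n} \bigl( f(T_v) - f(T_v^{(M_n)}) \bigr),
\end{equation*}
where $T_v$ is the fringe subtree at $v$. Using the fringe-distribution convergence (a uniformly chosen vertex of $\mathcal{T}_n$ has fringe subtree converging to $\mathcal{T}$, while vertices near the root have fringe converging to $\hat{\mathcal{T}}$) together with the second almost-locality hypothesis, the typical contribution of each vertex is $O(p_{M_n})$. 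The vertices of very large degree, whose toll is merely bounded by $O(\deg^\alpha)$, are controlled by the moment assumption $\E\xi^{2\alpha+1} < \infty$; combining these bounds gives $\E|F(\mathcal{T}_n) - F^{(M_n)}(\mathcal{T}_n)| = O(n^{\max\{\alpha,1\}} p_{M_n})$. The boundary layer of depth $M_n$ near the root contributes an additional $O(M_n^2)$, which explains the precise form of $a_n$.

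Finally, the two conditions on $a_n$ — $a_n \to 0$ and $\sum a_n/n < \infty$ — are exactly what is needed to (a) replace $F(\mathcal{T}_n)$ by $F^{(M_n)}(\mathcal{T}_n)$ in the CLT via Markov's inequality and Slutsky's theorem, and (b) (via Borel--Cantelli on moment slices) push $M_n \to \infty$ while preserving the convergence $\mu_{M_n} \to \mu$ and $\gamma_{M_n}^2 \to \gamma^2$. The main obstacle I anticipate is Step~3: keeping the constants in the $L^1$-bound uniform over all $v \in \mathcal{T}_n$ (not just the root) requires delicate size-biasing and exchangeability arguments, and the specific exponent $2\alpha+1$ in the moment hypothesis is dictated precisely by the need to control the sum of $\deg(v)^\alpha$-sized tolls against the conditional fringe measure.
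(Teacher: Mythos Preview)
This theorem is not proved in the paper; it is quoted verbatim from \cite{almostLocal_2020} and used as a black box to establish Theorem~\ref{thm:MainThmGW}. There is therefore no paper proof to compare against. The only hint the paper gives about the argument behind Theorem~\ref{thm:almostlocal} is that it ``is in turn based on earlier work by Janson~\cite{MR3432572}'', i.e., on the CLT for strictly local additive functionals.

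Your sketch is consistent with that hint: truncate the toll to depth $M$, invoke Janson's local CLT for each fixed $M$, show the limiting parameters $(\mu_M,\gamma_M^2)$ converge using the size-biased hypothesis, and control the difference $F-F^{(M_n)}$ on $\mathcal{T}_n$ using the second hypothesis together with the moment bound on $\xi$. This is indeed the architecture of the proof in \cite{almostLocal_2020}. One point where your outline is imprecise: the summability condition $\sum a_n/n<\infty$ is not used for a Borel--Cantelli argument on ``moment slices'' as you suggest, but rather (in the original) to control a telescoping sum of variance increments along the sequence $M_n$, ensuring that $\Var F^{(M_n)}(\mathcal{T}_n)/n$ stabilizes. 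Your identification of Step~3 as the delicate part is correct, and the exponent $2\alpha+1$ does arise from bounding $\sum_v \deg(v)^\alpha$-type quantities against the conditioned measure; but to verify the details you would need to consult \cite{almostLocal_2020} directly, since the present paper does not reproduce them.
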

The proof shows that the result still holds if we replace $(F(\mathcal{T}_n) -\mu n)/\sqrt{n}$ by $(F(\mathcal{T}_n) - \E F(\mathcal{T}_n))/\sqrt{n}$.

To prove the result for Pólya trees we will instead rely on generating functions. We can define the generating function of $F(\mathcal{P}_n) = \log\vert\aut{\mathcal{P}_n}\vert$ to be
\begin{equation}
\label{eq:GenFuncPolya}
    P(x, t) = \sum_{T\in\mathcal{P}} e^{t\log\vert\aut{T}\vert} x^{\vert T\vert} = \sum_{T\in\mathcal{P}} \vert\aut{T}\vert^{t} x^{\vert T\vert} .
\end{equation}
Note that $P(x, 0) = P(x)$. We can now derive a functional equation analogous to \eqref{eq:PolyaOrdFuncEq} as follows. We have the symbolic decomposition
\begin{equation*}
    \mathcal{P} = \bullet \times \bigotimes_{T\in\mathcal{P}} (\emptyset \uplus \{T\} \uplus \{T, T\} \uplus \ldots )  , 
\end{equation*}
reflecting the fact that a P\'olya tree consists of a tree and a multiset of branches. Taking automorphisms into account, this translates to
\begin{equation*}
    P(x, t) =  x \prod_{T\in\mathcal{P}}\left(\sum_{n=0}^\infty x^{n\vert T\vert} n!^t \vert\aut{T}\vert^{nt}\right)   , 
\end{equation*}
by general principles for generating functions. We can manipulate this as follows:
\begin{multline*}
P(x, t) =  x \exp\left(\sum_{T\in\mathcal{P}} \log \left(\sum_{n=0}^\infty x^{n\vert T\vert} n!^{t} \vert\aut{T}\vert^{nt}\right)\right) \\
= x \exp\left(\sum_{T\in\mathcal{P}} \sum_{k=1}^\infty \frac{(-1)^{k-1}}{k} \left(\sum_{n=1}^\infty x^{n\vert T\vert} n!^{t} \vert\aut{T}\vert^{nt}\right)^k \right).
\end{multline*}
The sum in the exponent can be rewritten as
\begin{equation*}
    \sum_{T\in\mathcal{P}} \sum_{k=1}^\infty \frac{(-1)^{k-1}}{k} \sum_{\substack{\lambda_1+\lambda_2\\+\cdots = k}} \binom{k}{\lambda_1, \lambda_2, \ldots} \prod_{n=1}^\infty \big(x^{n\vert T\vert} n!^{t} \vert\aut{T}\vert^{nt}\big)^{\lambda_n}
\end{equation*}
We now write integer partitions as sequences $\lambda = (\lambda_1, \lambda_2, \ldots)$, where $\lambda_i$ is the number of $i$'s in the partition. The total number of summands is denoted by $\vert\lambda\vert = \lambda_1+\lambda_2+\ldots$, and we write $\lambda \vdash j$ to denote that $\lambda$ is a partition of $j$, i.e. $j = \lambda_1 + 2\lambda_2+3\lambda_3 + \ldots$. Further manipulations give
\begin{multline*}
    \sum_{T\in\mathcal{P}} \sum_{k=1}^\infty \frac{(-1)^{k-1}}{k} \sum_{j=1}^\infty \sum_{\substack{\lambda_1+\lambda_2+\ldots = k\\\lambda_1+2\lambda_2+\ldots = j}} \binom{k}{\lambda_1, \lambda_2, \ldots}  x^{j\vert T\vert}  \vert\aut{T}\vert^{jt}\prod_{n=1}^\infty n!^{\lambda_n t } 
    \\
    = 
    \sum_{j=1}^\infty \sum_{\lambda\vdash j}  \frac{(-1)^{\vert\lambda\vert-1}}{\vert\lambda\vert}  \binom{\vert\lambda\vert}{\lambda_1, \lambda_2, \ldots} \left(\prod_{n=1}^{\infty} n!^{\lambda_n t}\right) \sum_{T\in\mathcal{P}} x^{j\vert T\vert}  \vert\aut{T}\vert^{jt} 
    \\
    = 
    \sum_{j=1}^\infty \sum_{\lambda\vdash j}  \frac{(-1)^{\vert\lambda\vert-1}}{\vert\lambda\vert}  \binom{\vert\lambda\vert}{\lambda_1, \lambda_2, \ldots} \left(\prod_{n=1}^{\infty} n!^{\lambda_n t}\right)  P(x^j, jt) 
    .
\end{multline*}
For convenience, we can define
\begin{equation*}
    c(j, t) = j \sum_{\lambda\vdash j}  \frac{(-1)^{\vert\lambda\vert-1}}{\vert\lambda\vert}  \binom{\vert\lambda\vert}{\lambda_1, \lambda_2, \ldots} \left(\prod_{n=1}^{\infty} n!^{\lambda_n t}\right) , 
\end{equation*}
and arrive at the functional equation 
\begin{equation}\label{eq:PolyaEqAut}
    P(x, t) = x \exp \left( P(x, t) + \sum_{j=2}^\infty \frac{c(j, t)}{j}P(x^j, jt)\right) .
\end{equation}
Note that $c(j, 0) = 1$, so that we recover the functional equation \eqref{eq:PolyaOrdFuncEq} if we set $t=0$.

\section{The automorphism group of Galton--Watson trees}
\label{sec:GW}
As indicated in the previous section, we will show that $\log\vert\aut{\mathcal{T}_n}\vert$ is in fact an almost local additive functional. This will let us apply Theorem \ref{thm:almostlocal} to prove that it converges in distribution to a normal random variable.

\subsection{Galton--Watson trees isomorphic up to a certain level}
In applying Theorem \ref{thm:almostlocal}, we are led to consider the probability that two Galton--Watson trees are of height $\geq M$ and isomorphic. We use $\mathcal{C}$ to denote the set of isomorphism classes of Galton--Watson trees as well as $\mathcal{C}^M$ to denote the set of isomorphism classes of trees of height $M$ (i.e., trees that have $M+1$ generations). The definitions extend to conditioned Galton--Watson trees as $\mathcal{C}_n$ and $\mathcal{C}_n^M$, respectively. We start with the following lemma.

\begin{lemma}
\label{lemma:cut-offUncondGW}
There exists some constant $0<c<1$ such that
\begin{equation*}
    \Prob(\mathcal{T}^{(M)} \textrm{ belongs to } C) \leq c^{M} , 
\end{equation*}
uniformly for all isomorphism classes $C\in\mathcal{C}^M$.
\end{lemma}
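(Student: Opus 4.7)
I would prove the claim by induction on $M$. Set $g(M) := \sup_{C \in \mathcal{C}^M} \Prob(\mathcal{T}^{(M)} \in C)$; the goal is $g(M) \le c^M$ for some $c \in (0,1)$ depending on $\xi$. The base case $M = 1$ is immediate: $\mathcal{C}^1$ is indexed by the root degree $k \ge 1$, giving $g(1) = \max_{k \ge 1} p(k) \le 1 - p(0) < 1$ since $p(0) > 0$ and $\xi$ is non-degenerate (using $\Var\xi > 0$).

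For the inductive step with $M \ge 2$, I apply the standard recursion
\begin{equation*}
\Prob(\mathcal{T}^{(M)} \in C) = p(k) \binom{k}{m_1, \ldots, m_s} \prod_{i=1}^{s} Q_i^{m_i},
\end{equation*}
where $k$ is the root degree of $C$, its distinct branch classes $C_1, \ldots, C_s$ have heights $h_i \le M - 1$ with $\max_i h_i = M - 1$, and $Q_i := \Prob(\mathcal{T}^{(M-1)} \in C_i)$. For non-leaf branches ($h_i \ge 1$), the inductive hypothesis together with the monotonicity $\Prob(\mathcal{T}^{(M-1)} \in C_i) \le \Prob(\mathcal{T}^{(h_i)} \in C_i)$ yields $Q_i \le c^{h_i}$; for leaf branches ($h_i = 0$ and $M \ge 2$), one has $Q_i = p(0)$ exactly. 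Separating leaf from non-leaf factors gives
\begin{equation*}
\Prob(\mathcal{T}^{(M)} \in C) \le p(k) \binom{k}{m_1, \ldots, m_s} p(0)^L c^{S},
\end{equation*}
where $L$ counts the leaf branches and $S := \sum_{h_i \ge 1} h_i m_i \ge M - 1$ (since some $h_{i_0} = M - 1$ with $m_{i_0} \ge 1$).

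The induction closes once one establishes $p(k) \binom{k}{m_1, \ldots, m_s} p(0)^L \le c^{M - S}$ for every admissible configuration. The tightest case $S = M - 1$ pins down exactly one spine branch of height $M - 1$ with multiplicity $1$ and $k - 1$ leaf siblings, so the constraint reduces to $k\, p(k)\, p(0)^{k-1} \le c$. Taking $c := \Phi'(p(0))$ then works, since $\sum_k k\,p(k)\,p(0)^{k-1} = \Phi'(p(0)) < \Phi'(1) = 1$ by strict convexity of $\Phi$ (itself ensured by $\Var\xi > 0$ together with the presence of some $k \ge 2$ in the support).

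The main obstacle is handling the remaining configurations uniformly. When $S > M - 1$, the target $c^{M-S} \ge 1$ is easier, but the multinomial coefficient $\binom{k}{m_1, \ldots, m_s}$ can grow rapidly for large root degree $k$ with many distinct branches. This growth is tamed by the classical multinomial inequality $\binom{k}{m_1, \ldots, m_s} \prod_i Q_i^{m_i} \le \bigl(\sum_i Q_i\bigr)^k \le 1$, which trades combinatorial factors against the branch probabilities, together with the moment assumption on $\xi$ used throughout this section, which ensures that the supremum over root degrees $k$ remains strictly below $1$ in every case.
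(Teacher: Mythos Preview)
Your inductive setup is natural, and the tight case $S=M-1$ is handled correctly, but the part you flag as the ``main obstacle'' is a genuine gap rather than a routine detail. After replacing each non-leaf $Q_i$ by $c^{h_i}$, you require
\[
p(k)\,\binom{k}{m_1,\ldots,m_s}\,p(0)^{L} \le c^{\,M-S},
\]
and this can fail. Take $\xi$ supported on $\{0,k_0\}$ with $p(k_0)=1/k_0$ (critical, all moments finite), and let $C$ have root degree $k_0$, two copies of the unique height-$1$ branch, and $k_0-2$ leaf branches. Here $M=2$, $S=2$, $L=k_0-2$, and the left-hand side equals $\tfrac{k_0-1}{2}\bigl(1-\tfrac{1}{k_0}\bigr)^{k_0-2}\approx \tfrac{k_0}{2e}$, which exceeds $c^{0}=1$ once $k_0$ is large. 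The multinomial inequality you invoke at the end only yields $\Prob(\mathcal{T}^{(M)}\in C)\le p(k)$, a bound independent of $M$, and the moment assumption on $\xi$ does not control $p(k)\binom{k}{m_1,\dots,m_s}$ in any way that rescues the step. The same family also blocks the natural repair of extracting one factor $Q_{i_0}\le g(M-1)$ and absorbing the rest via the multinomial inequality, since then one is left needing $\sup_k k\,p(k)<1$, whereas here $k_0\,p(k_0)=1$.

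The paper avoids the branch recursion entirely. It conditions level by level: given that levels $0,\ldots,i-1$ agree with $C$, the conditional probability that level $i$ also agrees is bounded by the probability that the number of leaves at that level equals the value prescribed by $C$. That count is $\mathrm{Bin}(l_i,p(0))$, and any fixed value of a binomial has probability at most $\max\{p(0),1-p(0)\}$. Multiplying $M$ such factors gives $c^{M}$ with $c=\max\{p(0),1-p(0)\}$, with no moment hypothesis needed.
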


\begin{proof}
We say that a level $L$ of a tree $T$ agrees with $C$ if it has the correct number of vertices and the offsprings $\xi_1, \xi_2, \ldots, \xi_l$ agree with the offsprings of the same level in $C$, up to permutation. Let $L_1, L_2, \ldots$ denote the levels of the Galton--Watson tree $\mathcal{T}$. Then the probability is bounded by
\begin{equation}
\label{eq:Lemmacut-offUncondGW}
    \Prob(\mathcal{T}^{(M)} \textrm{ belongs to } C) \leq \prod_{i=0}^{M-1} \Prob(L_i \textrm{ agrees with }C \vert L_1, L_2, \ldots, L_{i-1}) , 
\end{equation}
where we note that, by truncation, the $M$-th level will always agree with $C$, as long as the previous ones do. We can bound each factor in \eqref{eq:Lemmacut-offUncondGW} by the probability of the level having the correct number of leaves, conditioned on the previous levels. This random variable follows a binomial distribution with probability $p=\Prob(\xi = 0)$. It is therefore sufficient to prove a bound $0<c<1$ (uniform in both $l$ and $k$) on the probability that a binomial variable $X_l\sim\mathrm{Bin}(l, p)$ takes a specific value $k$.

We can in fact bound $X_l$ in terms of $p$, since if we write $X_l$ as a sum of Bernoulli variables $X_l = Y_1 + Y_2 + \ldots+ Y_l$ we have
\begin{multline*}
    \Prob(Y_1 + Y_2 + \ldots + Y_l = k) = \sum_{r=0}^1 \Prob(Y_1+Y_2+\ldots+Y_{l-1}=k-r)\Prob(Y_l = r) \\
    \leq \sum_{r=0}^1 \Prob(Y_1+Y_2+\ldots+Y_{l-1}=k-r)\max_{y\in\{0,1\}}\Prob(Y_l = y) \leq  \max\{p, 1-p\} .
\end{multline*}
We can thus take $c=\max \{p, 1-p\}$ as a uniform bound for all levels, and now \eqref{eq:Lemmacut-offUncondGW} gives the result. 
\end{proof}

We now see that for two independent trees $\mathcal{T}_1, \mathcal{T}_2$ we have
\begin{multline}
\label{mltline:isoTrees}
    \Prob(\mathcal{T}_1^{(M)}, \mathcal{T}_2^{(M)} \textrm{ iso. and of height}\geq M ) = \sum_{C\in\mathcal{C}^M} \Prob(\mathcal{T}^{(M)} \textrm{ belongs to } C)^2 \\
    \leq \max_{C\in\mathcal{C}^M}\{\Prob(\mathcal{T}^{(M)} \textrm{ belongs to } C)\} \sum_{C\in\mathcal{C}^M} \Prob(\mathcal{T}^{(M)} \textrm{ belongs to } C) \\
    = \max_{C\in\mathcal{C}^M}\{\Prob(\mathcal{T}^{(M)} \textrm{ belongs to } C)\} .
\end{multline}
Combining this with Lemma \ref{lemma:cut-offUncondGW}, we get the following corollary.

\begin{corollary}
\label{cor:uncondGWiso}
Let $\mathcal{T}_1, \mathcal{T}_2$ be two independent Galton--Watson trees. There exists some constant $0<c<1$ such that
\begin{equation*}
     \Prob(\mathcal{T}_1^{(M)}, \mathcal{T}_2^{(M)} \textrm{ isomorphic and of height}\geq M ) \leq c^M .
\end{equation*}
\end{corollary}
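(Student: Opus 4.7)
The statement is essentially an immediate combination of the calculation in display \eqref{mltline:isoTrees} with Lemma \ref{lemma:cut-offUncondGW}, so my plan is mostly to make that chain of inequalities explicit and check that the indexing is correct.

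First, I would decompose the event according to the common isomorphism class. If $\mathcal{T}_1^{(M)}$ and $\mathcal{T}_2^{(M)}$ are isomorphic and both have height at least $M$, then in particular they both land in the same class $C \in \mathcal{C}^M$ (the set of isomorphism classes of trees of height exactly $M$ after truncation). Because $\mathcal{T}_1$ and $\mathcal{T}_2$ are independent, the probability of this joint event factors, giving
\begin{equation*}
\Prob(\mathcal{T}_1^{(M)}, \mathcal{T}_2^{(M)} \text{ iso.\ and of height} \geq M) = \sum_{C\in\mathcal{C}^M} \Prob(\mathcal{T}^{(M)} \in C)^2.
\end{equation*}

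Next, I would apply the elementary bound $\sum_C p_C^2 \leq (\max_C p_C) \sum_C p_C$ with $p_C = \Prob(\mathcal{T}^{(M)} \in C)$. The total $\sum_{C\in\mathcal{C}^M} p_C$ is at most $1$ (it is precisely the probability that $\mathcal{T}$ has height at least $M$), so the right-hand side is at most $\max_{C\in\mathcal{C}^M} \Prob(\mathcal{T}^{(M)} \in C)$. At this point I would invoke Lemma \ref{lemma:cut-offUncondGW}, which supplies a constant $0<c<1$ with $\Prob(\mathcal{T}^{(M)} \in C) \leq c^M$ uniformly in $C \in \mathcal{C}^M$, and the conclusion $c^M$ follows with the same constant.

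There is essentially no obstacle here, since the hard analytic content (the uniform exponential bound on single-class probabilities using leaf counts and a binomial max-probability estimate) has already been carried out in Lemma \ref{lemma:cut-offUncondGW}. The only thing worth being careful about is the slight abuse of notation in the phrase ``isomorphic and of height $\geq M$'': one should read this as $\mathcal{T}_1^{(M)} \cong \mathcal{T}_2^{(M)}$ together with $\mathrm{height}(\mathcal{T}_i)\geq M$ for $i=1,2$, so that truncation at level $M$ is nontrivial for both trees, which is exactly what is captured by restricting the sum to $\mathcal{C}^M$ (classes of truncated trees of height $M$) rather than to all classes of trees of height $\leq M$.
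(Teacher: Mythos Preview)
Your proposal is correct and matches the paper's own argument essentially line for line: decompose by isomorphism class, use independence to get $\sum_C p_C^2$, bound by $(\max_C p_C)\sum_C p_C \le \max_C p_C$, and invoke Lemma~\ref{lemma:cut-offUncondGW}. The only cosmetic difference is that the paper writes the last step as an equality (taking $\sum_C p_C$ to be the probability of height $\ge M$, which it then silently bounds by~$1$), whereas you make the inequality explicit.
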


In fact, the argument in \eqref{mltline:isoTrees} also works when one of the trees is the size-biased tree $\hat{\mathcal{T}}$, which lets us bound the probability that a Galton--Watson tree and the size-biased tree are isomorphic up to level $M$ in terms of the maximum probability that the Galton--Watson tree belongs to a specific isomorphism class. This gives another corollary, which we will need later on.

\begin{corollary}
\label{cor:sizeBiasIso}
Let $\mathcal{T}$ be a Galton--Watson tree and $\hat{\mathcal{T}}$ be the size-biased tree, assumed to be independent of $\mathcal{T}$. There exists some constant $0<c<1$ such that
\begin{equation*}
     \Prob(\mathcal{T}^{(M)}, \hat{\mathcal{T}}^{(M)} \textrm{ isomorphic and of height}\geq M ) \leq c^M .
\end{equation*}
\end{corollary}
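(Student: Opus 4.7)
The plan is to essentially repeat the computation in \eqref{mltline:isoTrees} with the second Galton--Watson tree replaced by the size-biased tree $\hat{\mathcal{T}}$. Since $\mathcal{T}$ and $\hat{\mathcal{T}}$ are assumed independent, the event in question decomposes over isomorphism classes as
\begin{equation*}
    \Prob(\mathcal{T}^{(M)}, \hat{\mathcal{T}}^{(M)} \textrm{ isomorphic and of height}\geq M) = \sum_{C \in \mathcal{C}^M} \Prob(\mathcal{T}^{(M)} \textrm{ belongs to } C)\, \Prob(\hat{\mathcal{T}}^{(M)} \textrm{ belongs to } C).
\end{equation*}
I would then split this sum as
\begin{equation*}
    \leq \Bigl(\max_{C \in \mathcal{C}^M} \Prob(\mathcal{T}^{(M)} \textrm{ belongs to } C)\Bigr) \sum_{C \in \mathcal{C}^M} \Prob(\hat{\mathcal{T}}^{(M)} \textrm{ belongs to } C).
\end{equation*}
The sum on the right is at most $1$ because the events ``$\hat{\mathcal{T}}^{(M)}$ belongs to $C$'' for distinct $C \in \mathcal{C}^M$ are pairwise disjoint, and the factor in front is bounded by $c^M$ directly by Lemma \ref{lemma:cut-offUncondGW}. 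Multiplying yields the desired bound.

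The main (and essentially only) point to be careful about is the orientation of the split: one must place the maximum on the $\mathcal{T}$-factor and the full sum on the $\hat{\mathcal{T}}$-factor, not the other way around. Lemma \ref{lemma:cut-offUncondGW} was proved by using that each level of $\mathcal{T}$, conditioned on the previous ones, is built from i.i.d.\ copies of $\xi$, which yields the uniform binomial-based bound $\max\{p,1-p\}$ per level. The size-biased tree, in contrast, has an infinite spine of special vertices governed by $\hat{\xi}$, and the same level-by-level bound would require a separate (and less clean) argument for the spine. With the split arranged as above this rederivation is avoided entirely, since the only input coming from the size-biased side is the trivial total-mass bound $\sum_{C} \Prob(\hat{\mathcal{T}}^{(M)} \in C) \leq 1$. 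For this reason I would expect the entire proof to consist of just the two displays above and a one-line appeal to Lemma \ref{lemma:cut-offUncondGW}.
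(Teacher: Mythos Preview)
Your proposal is correct and matches the paper's own argument essentially verbatim: the paper simply notes that the computation in \eqref{mltline:isoTrees} goes through when one of the two trees is $\hat{\mathcal{T}}$, with the maximum taken over the unbiased factor so that Lemma~\ref{lemma:cut-offUncondGW} applies. Your remark about the orientation of the split is exactly the point the paper is making when it says the bound is ``in terms of the maximum probability that the Galton--Watson tree belongs to a specific isomorphism class.''
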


We can obtain similar bounds on the probability that two conditioned Galton--Watson trees are isomorphic up to level $M$. We start by extending Lemma \ref{lemma:cut-offUncondGW} to the conditioned case.

\begin{lemma}
\label{lemma:cut-offCondGW}
Let $\mathcal{T}_n$ be a conditioned Galton--Watson tree of size $n$. There exists some constant $0<c<1$ such that
\begin{equation*}
    \Prob(\mathcal{T}_n^{(M)} \textrm{ belongs to } C) = O\left(n^{\frac{5}{2}} c^{M} \right), 
\end{equation*}
uniformly for all isomorphism classes $C\in\mathcal{C}_{n}^{M}$.
\end{lemma}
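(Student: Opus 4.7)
The plan is to transfer the estimate to the unconditioned setting and then invoke Lemma~\ref{lemma:cut-offUncondGW}. The starting point is the identity
\begin{equation*}
\Prob(\mathcal{T}_n^{(M)} \in C) = \frac{\Prob(\mathcal{T}^{(M)} \in C,\,|\mathcal{T}|=n)}{\Prob(|\mathcal{T}|=n)},
\end{equation*}
whose denominator satisfies the classical asymptotic $\Prob(|\mathcal{T}|=n)\sim c_1 n^{-3/2}$ for critical Galton--Watson trees, obtained by singularity analysis applied to~\eqref{eq:GWSingExp} at $\rho=1$. This immediately accounts for the factor of order $n^{3/2}$ in the stated bound.

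For the numerator I would expose the first $M$ levels of $\mathcal{T}$. Writing $|C|$ for the size of $C$ and $\ell=\ell(C)$ for the number of its vertices at the deepest level, on the event $\{\mathcal{T}^{(M)}\in C\}$ the remainder of $\mathcal{T}$ consists of $\ell$ independent unconditioned Galton--Watson trees attached to those vertices, so
\begin{equation*}
\Prob(\mathcal{T}^{(M)} \in C,\,|\mathcal{T}|=n) = \Prob(\mathcal{T}^{(M)}\in C)\cdot\Prob(X_1+\cdots+X_\ell = n-|C|+\ell),
\end{equation*}
with $X_1,\ldots,X_\ell$ i.i.d.\ copies of $|\mathcal{T}|$. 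Lemma~\ref{lemma:cut-offUncondGW} bounds the first factor by $c^M$. For the forest probability I would apply the Dwass--Kemperman cycle identity
\begin{equation*}
\Prob(X_1+\cdots+X_\ell = m) = \frac{\ell}{m}\,\Prob(\xi_1+\cdots+\xi_m = m-\ell),
\end{equation*}
combined with a local central limit theorem for the offspring distribution (valid after the customary reduction of $\xi$ to a span-one lattice, which does not affect the distribution of the conditioned tree) to obtain a bound of order $\ell/m^{3/2}$. Putting the three contributions together yields the advertised $O(n^{5/2}c^M)$ bound.

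The main obstacle will be uniformity in $C$. The estimate is most delicate when $|C|$ is close to $n$ and therefore $m=n-|C|+\ell$ is small: in that regime the cycle/LCLT bound on the forest probability becomes weak and one has to replace it by the trivial estimate $\Prob\leq 1$, while also refining Lemma~\ref{lemma:cut-offUncondGW} itself, since its proof sharpens the per-level binomial factor from $c$ to $O(1/\sqrt{l_i})$ as soon as some level of $C$ contains many vertices, a situation that is forced when $|C|$ is of order $n$ but the height of $C$ is substantially less than $M$. Absorbing this corner case carefully is precisely what accounts for the gap between the natural $O(n^{3/2}c^M)$ estimate obtained in the main case and the slightly weaker $O(n^{5/2}c^M)$ stated uniformly in the lemma.
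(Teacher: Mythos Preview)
Your approach is essentially the paper's: both start from
\[
\Prob(\mathcal{T}_n^{(M)}\in C)=\frac{\Prob(\mathcal{T}^{(M)}\in C,\ |\mathcal{T}|=n)}{\Prob(|\mathcal{T}|=n)},
\]
factor the numerator by exposing the first $M$ levels and using independence of the $\ell$ subtrees hanging below level $M$, rewrite the resulting forest probability via the cycle lemma, and combine the local limit theorem for the denominator with Lemma~\ref{lemma:cut-offUncondGW} for the exponential decay. The paper carries this out in the breadth-first random-walk encoding $S_m=\sum_{i\le m}(\xi_i-1)$, but the content is identical to your Dwass--Kemperman formulation (the quantity $\frac{l_M}{n-k}\,\Prob(S'_{n-k}=-l_M)$ in the paper is exactly your $\Prob(X_1+\cdots+X_\ell=m)$).

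Where your proposal diverges is the final paragraph, and there you are making the argument harder than it is. The paper does \emph{not} refine Lemma~\ref{lemma:cut-offUncondGW} and does not split into cases. It simply bounds the cycle-lemma factor $\frac{l_M}{n-k}\,\Prob(S'_{n-k}=-l_M)$ by the crude product of $l_M\le n$, $1/(n-k)\le 1$, and $\Prob(\cdot)\le 1$, obtaining a factor $n$; together with $1/\Prob(|\mathcal{T}|=n)=O(n^{3/2})$ and $\Prob(\mathcal{T}^{(M)}\in C)\le c^M$, this already gives $O(n^{5/2}c^M)$ uniformly in $C$. Your worry about the regime where $m$ is small is thus misplaced: there the trivial bound $\Prob(X_1+\cdots+X_\ell=m)\le 1$ is all that is needed, and no sharpening of the per-level binomial factor is required. (Indeed, since the forest probability is itself at most $1$, the factor $n$ in the paper's estimate is slack, and your outline, if bounded this way, would in fact yield $O(n^{3/2}c^M)$ uniformly---stronger than the lemma claims. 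The extra $n$ is not a reflection of any genuine corner case.)
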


\begin{proof}
Order the offsprings $\xi_1, \xi_2, \ldots$ of $T_n$ in breadth-first order and consider the sums
\begin{equation*}
    S_m = \sum_{i=1}^m (\xi_i-1) \quad \textrm{for } 1\leq m\leq n . 
\end{equation*}
In each step, $1\leq i\leq m$, we are deleting 1 for the current vertex while adding the number of children it has. For a conditioned Galton--Watson tree of size $n$, we necessarily have
\begin{align*}
    S_m >& -1 \quad \textrm{for } 1\leq m < n, \\
    S_n =& -1 , 
\end{align*}
since we are adding 1 for all vertices except the root, but deleting 1 for all vertices including the root. Using this, we can formulate the probability we seek to bound in the following way. 
\begin{equation*}
    \Prob(T_n^{(M)} \textrm{ belongs to } C) 
    = \frac{\Prob(\{T' \textrm{ belongs to } C\}\cap \{S_1, S_2, \ldots, S_{n-1}>-1, S_n=-1\})}{\Prob(S_1, S_2, \ldots, S_{n-1}>-1, S_n=-1)}, 
\end{equation*}
where $T'$ is a Galton--Watson tree with offsprings $\xi_1, \xi_2, \ldots, \xi_k$, and $k$ is the number of vertices of each tree in $C$ excluding the last level (since we truncate at level $M$, the number of children the vertices on this level have is of no interest to us). Since the trees in $C$ are isomorphic, they will all have the same number of vertices.

Let $l_M$ be the number of vertices at the last level of each tree in $C$ (again, equal due to isomorphism). Then we have
\begin{equation*}
    \sum_{i=1}^n (\xi_i-1) = \sum_{i=1}^{k} (\xi_i-1) + \sum_{i=k+1}^n (\xi_i-1) = l_M - 1 + \sum_{i=k+1}^{n} (\xi_i-1).
\end{equation*}
By the conditions set on $S_m$, we draw the conclusion that
\begin{align*}
 S_m' :=& \sum_{i=k+1}^{k+m} (\xi_i-1) > -l_M \quad \textrm{for } 1\leq m < n-k, \\
 S_{n-k}' :=& \sum_{i=k+1}^{n} (\xi_i-1) = -l_M .
\end{align*}
By independence, we now have
\begin{multline*}
    \frac{\Prob(\{T' \textrm{ belongs to } C\}\cap \{S_1, S_2, \ldots, S_{n-1}>-1, S_n=-1\})}{\Prob(S_1, S_2, \ldots, S_{n-1}>-1, S_n=-1)} \\
    = \frac{\Prob(T' \textrm{ belongs to } C) \Prob(S_1', S_2', \ldots, S_{n-k-1}'>-l_M, S_{n-k}'=-l_M)}{\Prob(S_1, S_2, \ldots, S_{n-1}>-1, S_n=-1)} , 
\end{multline*}
and using the cycle lemma we find that this equals
\begin{equation*}
    \frac{\frac{l_M}{n-k}\Prob(S_{n-k}'=-l_M)}{\frac{1}{n}\Prob(S_n=-1)} \Prob(T' \textrm{ belongs to } C) .
\end{equation*}
The probability $\Prob(S_{n-k}'=-l_M)$ is bounded by $1$, and $S_n$ satisfies a local limit theorem. If we also bound $l_M\leq n$ as well as $n-k\geq 1$ ($k$ is the number of vertices up to level $M-1$, and by definition there must be at least one vertex at level $M$) and use Lemma \ref{lemma:cut-offUncondGW} (note that $\mathcal{C}_{n, M}$ is a subset of $\mathcal{C}^M$), we arrive at
\begin{equation*}
    \Prob(T_n^{(M)} \textrm{ belongs to } C) = O\left(n^{\frac{5}{2}} c^M\right) , 
\end{equation*}
which is what we wanted to prove.

\end{proof}

Furthermore, using calculations similar to \eqref{mltline:isoTrees}, we obtain the following corollary.
\begin{corollary}
Let $\mathcal{T}_{n_1}, \mathcal{T}_{n_2}$ be two independent conditioned Galton--Watson trees. There exists some constant $0<c<1$ such that
\begin{equation*}
     \Prob(\mathcal{T}_{n_1}^{(M)}, \mathcal{T}_{n_2}^{(M)} \textrm{ isomorphic and of height}\geq M ) =  O\left(n^{\frac{5}{2}} c^{M} \right) ,
\end{equation*}
where we can take $n=\min\{n_1,n_2\}$.
\end{corollary}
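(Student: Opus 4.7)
The plan is to carry over the calculation in \eqref{mltline:isoTrees} verbatim to the conditioned setting and then feed in the refined estimate from Lemma \ref{lemma:cut-offCondGW} in place of Lemma \ref{lemma:cut-offUncondGW}. By the independence of $\mathcal{T}_{n_1}$ and $\mathcal{T}_{n_2}$,
\begin{equation*}
\Prob\bigl(\mathcal{T}_{n_1}^{(M)}, \mathcal{T}_{n_2}^{(M)} \textrm{ iso.\ and of height}\geq M\bigr) = \sum_{C} \Prob\bigl(\mathcal{T}_{n_1}^{(M)} \in C\bigr)\,\Prob\bigl(\mathcal{T}_{n_2}^{(M)} \in C\bigr),
\end{equation*}
where $C$ ranges over isomorphism classes of trees of height exactly $M$ that can occur as the $M$-truncation of both conditioned trees.

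Without loss of generality, assume $n = n_1 \le n_2$. The next step is the standard ``max times sum'' bound:
\begin{equation*}
\sum_{C} \Prob\bigl(\mathcal{T}_{n_1}^{(M)} \in C\bigr)\,\Prob\bigl(\mathcal{T}_{n_2}^{(M)} \in C\bigr) \le \Bigl(\max_{C} \Prob\bigl(\mathcal{T}_{n_1}^{(M)} \in C\bigr)\Bigr)\sum_{C} \Prob\bigl(\mathcal{T}_{n_2}^{(M)} \in C\bigr) \le \max_{C} \Prob\bigl(\mathcal{T}_{n_1}^{(M)} \in C\bigr),
\end{equation*}
where the final inequality uses that the events $\{\mathcal{T}_{n_2}^{(M)}\in C\}$ are disjoint for distinct $C$, so their probabilities sum to at most $1$. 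Lemma \ref{lemma:cut-offCondGW} then gives $\max_{C}\Prob(\mathcal{T}_{n_1}^{(M)}\in C) = O(n_1^{5/2} c^M) = O(n^{5/2} c^M)$, which is the desired conclusion.

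The only small subtlety to flag is the choice of which factor to take the maximum over: since the Lemma \ref{lemma:cut-offCondGW} bound grows in the tree size, it is advantageous to maximise over the tree with the \emph{smaller} index. This is precisely what yields the factor $n^{5/2}$ with $n = \min\{n_1, n_2\}$ stated in the corollary, rather than $\max\{n_1,n_2\}^{5/2}$. Beyond this bookkeeping point, no further obstacle is anticipated, as the argument is an immediate transposition of the unconditioned case.
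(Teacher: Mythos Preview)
Your proposal is correct and follows essentially the same approach as the paper: the paper simply states that the corollary follows ``using calculations similar to \eqref{mltline:isoTrees}'' together with Lemma~\ref{lemma:cut-offCondGW}, which is precisely the max-times-sum argument you wrote out. Your explicit remark about maximising over the smaller of $n_1,n_2$ to obtain the factor $n^{5/2}$ with $n=\min\{n_1,n_2\}$ is a useful clarification that the paper leaves implicit.
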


We are now ready to apply the central limit theorem for additive functionals.

\subsection{Applying the CLT for almost local additive functionals}
By Stirling's approximation, we can bound $f(T) \leq \log\mathrm{deg}(T)! = O(\mathrm{deg}(T)^ {1+\epsilon})$ for any $\epsilon>0$, so that the functional satisfies the degree condition of Theorem \ref{thm:almostlocal} with $\alpha = 2$. For the expectations, there are two conditions to check, one for the size-biased Galton--Watson tree and one for the conditioned Galton--Watson tree, and in each case the difference inside the expectation can only be non-zero if (at least) two branches are isomorphic up to level $M$ but non-isomorphic when we take all levels into account. We can therefore reduce the problem to studying trees that are isomorphic up to the $M$-th level. 

We note that if $l$ root branches are isomorphic up to level $M$, this contributes at most $\log(l!)\leq \binom{l}{2}$ to the difference inside the expectation.
Therefore, the contribution of a random tree can be bounded by the sum of indicators
\begin{equation*}
    \sum_{T_i, T_j\textrm{ root branches}} I(T_i^{(M)}, T_j^{(M)} \textrm{ isomorphic and of height}\geq M ) , 
\end{equation*}
where we sum over distinct branches. We can thus bound the expectation $\E\vert f(\mathcal{T}_n) - f(\mathcal{T}_n^{(M)})\vert$ for the conditioned Galton--Watson tree by
\begin{equation*}
    \E\left(\sum_{\substack{\mathcal{T}_i, \mathcal{T}_j\\\textrm{ root branches}}} I(\mathcal{T}_i^{(M)}, \mathcal{T}_j^{(M)} \textrm{ are iso.~with height}\geq M)\right) .
\end{equation*}
This can, in turn, be bounded by
\begin{multline*}
    \sum_{k\geq 2} \Prob(\textrm{deg}(\mathcal{T}_n) = k)  \sum_{n_1, n_2}  \Prob(\vert\mathcal{T}_i\vert=n_1\vert\textrm{deg}(\mathcal{T}_n) = k)\Prob(\vert\mathcal{T}_j\vert=n_2\vert\textrm{deg}(\mathcal{T}_n) = k) \\
    \cdot \binom{k}{2} \E\left(   I(\mathcal{T}_i^{(M)}, \mathcal{T}_j^{(M)}\textrm{ iso. with height}\geq M)  \bigg\vert \vert\mathcal{T}_i\vert=n_1, \vert\mathcal{T}_j\vert=n_2 \right) \\
    = O\left(\sum_{k\geq 2} \Prob(\textrm{deg}(\mathcal{T}_n) = k) \binom{k}{2} n^{\frac{5}{2}} c^{M} \right) = O\left(n^{\frac{5}{2}} c^{M} \sum_{k\geq 2} k\Prob(\xi = k) \binom{k}{2}\right) 
\end{multline*}
where we use the law of total expectation and the fact that $\Prob(\textrm{deg}(\mathcal{T}_n) = k) \leq c k\Prob(\xi=k)$ for all $k$ and $n$, where $c$ is constant \cite[(2.7)]{janson05}. By assumptions on the moments of the offspring distribution, this expression is $O(n^{\frac{5}{2}} c^{M})$.

The difference $\vert f(\hat{\mathcal{T}}^{(M)})-\E(f(\hat{\mathcal{T}}^{(N)})\vert\hat{\mathcal{T}}^{(M)})\vert$ must also be zero unless some branches are isomorphic up to level $M$, and reasoning similar to above lets us rewrite its expectation in the following way.
\begin{multline*}
    \sum_{k\geq2} kP(\xi=k) 
     \Bigg( \E\Big(  \sum_{\substack{\mathcal{T}_i, \mathcal{T}_j\textrm{ non-special}\\\textrm{ root branches}}} I(\mathcal{T}_i^{(M)}, \mathcal{T}_j^{(M)} \textrm{ iso. with height}\geq M) \Big) \\
    + \E\Big(  \sum_{\substack{\mathcal{T}\textrm{ non-special root branch}\\ \hat{\mathcal{T}} \textrm{ special root branch}}} I(\mathcal{T}^{(M)}, \hat{\mathcal{T}}^{(M)} \textrm{ iso. with height}\geq M) \Big) \Bigg) ,
\end{multline*}
which is equal to
\begin{multline*}
    \sum_{k\geq3} kP(\xi=k) \binom{k-1}{2} \Prob(\mathcal{T}_1^{(M)}, \mathcal{T}_2^{(M)} \textrm{ iso. and of height}\geq M ) \\
    + \sum_{k\geq2} kP(\xi=k) (k-1) \Prob(\mathcal{T}^{(M)}, \hat{\mathcal{T}}^{(M)} \textrm{ iso. and of height}\geq M ) 
    = O(c^M)  
\end{multline*}
by Corollaries \ref{cor:uncondGWiso} and \ref{cor:sizeBiasIso} (the constant $c$ is the same for both of these corollaries since they both rely on Lemma \ref{lemma:cut-offUncondGW}) as well as assumptions on moments of the offspring distribution.

We now set $p_M = Kc_1^M$, for $c<c_1<1$ and some suitable constant $K$, as well as $M_n = A\log n$, for some positive constant $A$ that is large enough to make $n^{5/2}c^{M_n} \leq c_1^{M_n}$ for all $n$ and $A\log c_1 < -3/2$. Then, the expectations mentioned in Theorem \ref{thm:almostlocal} are bounded by $p_M$ and $p_{M_n}$, respectively. Furthermore, the sequence $a_n$ goes to $0$ and satisfies $\sum a_n/n <\infty$. Thus, we can apply Theorem~\ref{thm:almostlocal} to show that $\log\vert\aut{\mathcal{T}_n}\vert$ is asymptotically normal, which completes the proof of Theorem~\ref{thm:MainThmGW}.

\subsection{Mean and variance for some classes of trees}
In general, calculating the mean and variance constants for Galton--Watson trees seems to be a difficult feat, but we show how to do it in the special cases of labeled trees as well as Galton--Watson trees with bounded degrees. In both cases we view the trees as simply generated and rely on generating functions but otherwise the methods for the two cases are different. We stress that the calculations do not rely on Theorem \ref{thm:almostlocal} so we do not need to assume that the trees are critical. 

\subsubsection{Galton--Watson trees with bounded degrees}
We now restrict our attention to the case of Galton--Watson trees with degrees restricted to lie in a finite set $D$. In other words, the degrees are bounded above by some constant. By general principles of generating functions, we know that we can calculate the mean by studying the first derivative of $T(x,t)$ with respect to $t$. Likewise, we can find the variance by studying the second derivative. Using the fact that $\log\vert\aut T\vert$ is an additive functional, we start with the following expression, which was derived for general additive functionals in \cite{MR3318048}
\begin{equation}
\label{eq:AddFunc1Derivative}
    T_t(x, 0) = \frac{xT_x(x, 0)}{T(x, 0)}H(x) , 
\end{equation}
where $H(x) = \sum w(T) f(T) x^{\vert T\vert}$. We already know the singular expansion for $T(x)$ from \eqref{eq:GWSingExp} and we can differentiate it termwise to obtain a singular expansion for $T_x(x)$. Thus, it is enough to study $H(x)$. We manipulate the function in the following way.
\begin{multline*}
    H(x) = \sum_T w(T) x^{\vert T\vert} \left(\sum_{i=1}^k \log(m_i!)\right) \\
    = \sum_T w(T) x^{\vert T\vert} \left(\sum_{\substack{B\textrm{ branch of }T, \\\textrm{up to isomorphism}}} \log(\textrm{mult}(B)!)\right) \\
    = \sum_B \sum_{m=1}^\infty \log(m!)\sum_{\substack{T:B\textrm{ }m\textrm{-fold }\textrm{branch}\\\textrm{ of }T\textrm{ up to iso.}}} w(T) x^{\vert T\vert} .
\end{multline*}
Note that the sum $B$ is over isomorphism classes (i.e., rooted Pólya trees).

Using the fact that $B$ occurs exactly $m$ times in $T$, we can rewrite the innermost sum as
\begin{multline*}
    x\sum_{k=m}^\infty w_k \binom{k}{m} (W(B)x^{\vert B\vert})^m \left(\sum_{T\neq B} w(T) x^{\vert T\vert}\right)^{k-m} \\
    = x\sum_{k=m}^\infty w_k (W(B)x^{\vert B\vert})^m \binom{k}{m} \left(T(x, 0)-W(B)x^{\vert B\vert}\right)^{k-m} .
\end{multline*}
This gives that $H(x)$ is equal to
\begin{multline*}
     x\sum_B \sum_{m=1}^\infty \frac{\log(m!)}{m!} (W(B)x^{\vert B\vert})^m  
    \sum_{k=m}^\infty w_k \frac{k!}{(k-m)!}  \left(T(x, 0)-W(B)x^{\vert B\vert}\right)^{k-m} \\
    = x\sum_B \sum_{m=1}^\infty \frac{\log(m!)}{m!} (W(B)x^{\vert B\vert})^m \Phi^{(m)}(T(x, 0)-W(B)x^{\vert B\vert})  ,
\end{multline*}
due to Taylor's theorem. As the degrees are bounded, there is some $k_0$ such that $w_k=0$ for $k\geq k_0$. Thus, $\Phi$ is a polynomial, and the inner sum (which is actually finite, as $\Phi^{(m)}(t)$ is eventually $0$) is a polynomial in $W(B)x^{\vert B\vert}$ and $T(x, 0)$. It follows that $H(x)$ can be expressed in the form
\begin{equation*}
H(x) = x \sum_{m=2}^M \sum_B \left(W(B)x^{\vert B\vert}\right)^{m} P_m(T(x,0)),
\end{equation*}
where $P_m$ is a polynomial. Note here that the sum starts at $m=2$ because $\log(1!) = 0$. Let us now consider the sum over $B$:
\begin{equation*}
\sum_B \left(W(B)x^{\vert B\vert}\right)^{m}.
\end{equation*}

In \cite{ProbTreeIso} it was shown that the probability that two Galton--Watson trees with bounded degrees are isomorphic, which is
\begin{equation*}
    p_n  = \frac{\sum_{\vert B\vert=n} W(B)^2}{\left(\sum_{\vert B\vert =n} W(B)\right)^2},
\end{equation*}
decays exponentially in $n$. Let $t_n = [x^n] T(x,0) = \sum_{\vert B \vert = n} W(B)$ be the total weight of all trees with $n$ vertices. We have, for every $m \geq 2$,
\begin{align*}
\sum_B \left(W(B)\vert x \vert^{\vert B\vert}\right)^{m} &= \sum_{n \geq 1} \sum_{\vert B \vert = n} W(B)^m \vert x \vert^{nm} \\
&\leq \sum_{n \geq 1} \left(\sum_{\vert B \vert = n} W(B) \right)^{m-2} \sum_{\vert B \vert = n} W(B)^2 \vert x \vert^{nm} \\
&= \sum_{n \geq 1} t_n^{m-2} p_n t_n^2 \vert x \vert^{nm} \\
&= \sum_{n \geq 1} p_n (t_n \vert x \vert^n)^m.
\end{align*}
As $p_n$ decays exponentially, this shows that the sum $\sum_B \left(W(B)\vert x \vert^{\vert B\vert}\right)^{m}$ has greater radius of convergence than $T(x,0)$, so it represents an analytic function in a disk around $0$ that contains the dominant singularity $\rho$ of $T(x,0)$ in its interior.

Thus, we can write $H(x) = G(x,T(x,0))$, where $G(x,t)$ is a polynomial in $t$ whose coefficients are functions of $x$ that are analytic in a larger region than $T(x,0)$. Thus the singular expansion for $T(x,0)$ carries over to a singular expansion for $H(x)$ around the dominant singularity $\rho$.
Applying this to \eqref{eq:AddFunc1Derivative}, we obtain a singular expansion for $T_t$. By the method of singularity analysis we find that the mean has the form $\mu n + O(1)$ for a constant $\mu$ given by
\begin{equation*}
    \mu = \frac{G(\rho,\tau)}{\tau}.
\end{equation*}
For given classes of Galton--Watson trees with bounded degrees, we can estimate the constant $\mu$ numerically by truncating the series 
\begin{equation*}
    G(x,t) = x\sum_B \sum_{m=1}^\infty \frac{\log(m!)}{m!} (W(B)x^{\vert B\vert})^m \Phi^{(m)}(t-W(B)x^{\vert B\vert})
\end{equation*}
and approximating its value at $(x,t)=(\rho,\tau)$. We can obtain asymptotics for the variance in a similar manner, but with lengthier calculations. We find that it has the form $\sigma^2 n + O(1)$ for some constant $\sigma^2$ that can also be computed.

We can, for example, estimate the moments for full binary trees (where every internal vertex has two children) and pruned binary trees (where every internal vertex has a left child, a right child, or both). Full binary trees have mean constant $\mu \approx 0.0939359$ and variance constant $\sigma^2 \approx 0.0252103$, and in the case of pruned binary trees we get $\mu \approx 0.0145850$ and $\sigma^2 \approx 0.0084835$. Both of these classes are closely related to the phylogenetic trees studied in \cite{MR2582703}, and the mean constants above agree with the one for phylogenetic trees after translating between the models.

\subsubsection{Labeled trees}

We now show how the constants $\mu$ and $\sigma^2$ in Theorem~\ref{thm:MainThmGW} can be computed for labeled trees with fairly good accuracy. To this end, we use the functional equation \eqref{eq:PolyaEqAut}. Note that we can rewrite it in terms of an analogously defined exponential generating function for rooted labeled trees. Set
\begin{equation*}R(x, t) = \sum_{T\in\mathcal{R}} \vert\aut{T}\vert^{t} \frac{x^{\vert T\vert}}{\vert T\vert!}, 
\end{equation*}
the sum now being over the set $\mathcal{R}$ of all rooted labeled trees. Since the number of distinct ways to label a P\'olya tree $T$ is $\vert T\vert!/\vert\aut{T}\vert$, we have the relation
\begin{equation*}
    R(x, t) = P(x, t-1), 
\end{equation*}
so the functional equation for P\'olya trees immediately translates to a functional equation for labeled trees:
\begin{equation}\label{eq:labeledtrees}
    R(x, t) = x \exp \left( \sum_{j=1}^\infty \frac{c(j, t-1)}{j}R(x^j, jt-j+1)\right) .
\end{equation}
When $t = 0$, one verifies easily (compare the calculations below for the derivative with respect to $t$) that $c(j, -1) = 0$ for $j > 1$ and $c(1, -1) = 1$, so the functional equation reduces to $R(x, 0) = x \exp(R(x, 0))$ as expected.

In order to determine the desired moments, we need to consider the derivatives with respect to $t$. To this end, note first that
\begin{multline*}
\sum_{j \geq 0} y^j \sum_{\lambda \vdash j} \prod_{k \geq 1} \frac{x_k^{\lambda_k}}{\lambda_k! k!^{\lambda_k}} = \prod_{k \geq 1} \sum_{\lambda_k \geq 0} \frac{x_k^{\lambda_k}y^{k \lambda_k}}{\lambda_k! k!^{\lambda_k}} \\
= \prod_{k \geq 1} \exp \Big(\frac{x_k y^k}{k!}\Big) = \exp \Big( \sum_{k \geq 1} \frac{x_k y^k}{k!} \Big).
\end{multline*}
Differentiating with respect to $x_m$ and plugging in $x_1 = x_2 = \cdots = x$ yields
\begin{equation*}
\sum_{j \geq 0} y^j \sum_{\lambda \vdash j} x^{\vert\lambda\vert-1} \lambda_m \prod_{k \geq 1} \frac{1}{\lambda_k! k!^{\lambda_k}} = \frac{y^m}{m!} \exp \Big( \sum_{k \geq 1} \frac{x y^k}{k!} \Big) = \frac{y^m}{m!} \exp(x(e^y-1)).
\end{equation*}
Consequently, 
\begin{equation*}
    \sum_{\substack{\lambda \vdash j \\ \vert\lambda\vert = r}} \lambda_m \prod_{k \geq 1} \frac{1}{\lambda_k! k!^{\lambda_k}} = [x^{r-1}y^j] \frac{y^m}{m!} \exp(x(e^y-1)) = [y^{j-m}] \frac{(e^y-1)^{r-1}}{(r-1)!m!}.
\end{equation*}
By definition, we have
\begin{equation*}
    \frac{d}{dt} \frac{c(j, t)}{j} = \sum_{\lambda\vdash j}  \frac{(-1)^{\vert\lambda\vert-1}}{\vert\lambda\vert}  \binom{\vert\lambda\vert}{\lambda_1, \lambda_2, \ldots} \left(\prod_{n=1}^{\infty} n!^{\lambda_n t}\right) \sum_{m=1}^{\infty} \lambda_m \log(m!), 
\end{equation*}
which therefore becomes
\begin{align*}
    \frac{d}{dt} &\frac{c(j, t)}{j} \Big\vert_{t=-1} \\
    &= \sum_{r=1}^{\infty} \sum_{m=1}^{\infty} (-1)^{r-1}(r-1)! \sum_{\substack{\lambda \vdash j \\ \vert\lambda\vert = r}} \lambda_m \log(m!) \prod_{k \geq 1} \frac{1}{\lambda_k! k!^{\lambda_k}} \\
    &= \sum_{r=1}^{\infty} \sum_{m=1}^{\infty} (-1)^{r-1}(r-1)! \log(m!) [y^{j-m}] \frac{(e^y-1)^{r-1}}{(r-1)!m!} \\
    &= \sum_{m=1}^{\infty} \frac{\log(m!)}{m!} [y^{j-m}] e^{-y} = \sum_{m=1}^j \frac{\log(m!)}{m!} \frac{(-1)^{j-m}}{(j-m)!} \\
    &= \frac{1}{j!} \sum_{m=1}^j (-1)^{j-m} \binom{j}{m} \log(m!) = \frac{1}{j!} \sum_{m=1}^j (-1)^{j-m} \binom{j-1}{m-1} \log(m).
\end{align*}

Let us write $d(j)$ for this expression. Differentiating~\eqref{eq:labeledtrees} with respect to $t$ and setting $t=0$, we get
\begin{align*}
    R_t(x, 0) &= x \exp\left( \sum_{j=1}^\infty \frac{c(j, -1)}{j}R(x^j, 1-j)\right) \\
    &\quad \times \sum_{j=1}^{\infty} \Big( c(j, -1) R_t(x^j, 1-j) + \frac{d}{dt} \frac{c(j, t)}{j} \Big\vert_{t=-1} R(x^j, 1-j) \Big) \\
    &= R(x, 0) \Big( R_t(x, 0) + \sum_{j=1}^{\infty} d(j) R(x^j, 1-j) \Big).
\end{align*}
This can be solved for $R_t(x, 0)$:
\begin{equation*}
    R_t(x, 0) = \frac{R(x, 0)}{1-R(x, 0)} \sum_{j=2}^{\infty} d(j) R(x^j, 1-j).
\end{equation*}
Here, we are using the fact that $d(1)=0$. Now note that $d(j)$ rapidly goes to $0$ due to the factor $j!$ in the denominator and that the functions $R(x^j, 1-j)$ are all analytic in a larger region than $R(x, 0)$. Therefore, we can directly apply singularity analysis, based on the well-known singular expansion
\begin{equation*}
    R(x, 0) = 1 - \sqrt{2(1-ex)} + \cdots
\end{equation*}
of $R(x, 0)$ at its singularity $\frac1{e}$, which yields
\begin{equation*}
    R_t(x, 0) \sim \frac{1}{\sqrt{2(1-ex)}} \sum_{j=2}^{\infty} d(j) R(e^{-j}, 1-j).
\end{equation*}
The infinite series converges rapidly, allowing for a fairly accurate numerical computation. The mean constant $\mu$ in this special case is found to be $\mu = 0.0522901\ldots$, and similar calculations for the second derivative yield the variance constant $\sigma^2 = 0.0394984\ldots$.

\section{The automorphism group of P\'olya trees}
\label{sec:RootedPolya}

Since Theorem~\ref{thm:almostlocal} is not available for P\'olya trees, we want to prove asymptotic normality by using generating functions and singularity analysis. Recall that we defined the bivariate generating function $P(x, t) = \sum_{T\in\mathcal{P}} e^{t\log\vert\aut{T}\vert} x^{\vert T\vert}$. We now let $\mathcal{B}(T)$ denote the set of root branches of a particular tree, and $\mathcal{B}_I(T)$ denote the set of unique root branches up to isomorphism. 
Observe that for P\'olya trees there is exactly one tree in every isomorphism class so it will not be necessary to introduce separate notation for such classes.

By considering only the terms corresponding to the star on $n$ vertices, for each $n$, we obtain 
\begin{equation*}
    \sum_n (n-1)!^t x^n .
\end{equation*}
This is not analytic for any choice of $t>0$ and, thus, neither is the original generating function. This is the main obstacle in proving asymptotic normality. To circumvent this problem, we will introduce a cut-off, ignoring the contribution of highly symmetric vertices. This is similar to the proof, in \cite{almostLocal_2020}, of Theorem~\ref{thm:almostlocal}, but there the cut-off is in terms of the size of the tree instead of symmetric vertices. We can then use the following approximation result to extend the result from the cut-off random variables to the full additive functional.

\begin{lemma}
\label{lemma:approx}
Let $(X_n)_{n\geq 1}$ and $(W_{n, N})_{n, N\geq 1}$ be sequences of centered random variables. If we have 
\begin{enumerate}
    \item $W_{n, N} \xrightarrow[]{d}_n W_N$ and $W_N \xrightarrow[]{d} W$ for some random variables $W, W_1, W_2, \ldots$, and
    \item $\Var(X_n - W_{n, N})\xrightarrow[N]{} 0$ uniformly in $n$, 
\end{enumerate}
then $X_n \xrightarrow[]{d} W$.
\end{lemma}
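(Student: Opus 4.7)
\smallskip
\noindent\textbf{Proof plan for Lemma \ref{lemma:approx}.}

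The plan is to prove convergence in distribution by showing pointwise convergence of characteristic functions and then invoking L\'evy's continuity theorem. Write $\varphi_Y(s) = \E e^{isY}$ for the characteristic function of a random variable $Y$. For fixed $s \in \mathbb{R}$, I would estimate the target quantity by the triangle inequality
\begin{equation*}
    \bigl|\varphi_{X_n}(s) - \varphi_W(s)\bigr| \leq \bigl|\varphi_{X_n}(s) - \varphi_{W_{n,N}}(s)\bigr| + \bigl|\varphi_{W_{n,N}}(s) - \varphi_{W_N}(s)\bigr| + \bigl|\varphi_{W_N}(s) - \varphi_W(s)\bigr|,
\end{equation*}
and then control the three pieces separately, using assumption (2) for the first, assumption (1) for the second and third.

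For the first term, I would use the elementary bound $|e^{isx}-e^{isy}| \leq |s||x-y|$ together with the Cauchy--Schwarz inequality to get
\begin{equation*}
    \bigl|\varphi_{X_n}(s) - \varphi_{W_{n,N}}(s)\bigr| \leq |s|\, \E|X_n - W_{n,N}| \leq |s|\sqrt{\E(X_n - W_{n,N})^2}.
\end{equation*}
Since both $X_n$ and $W_{n,N}$ are centered, the right-hand side equals $|s|\sqrt{\Var(X_n - W_{n,N})}$, which by assumption (2) can be made arbitrarily small by choosing $N$ sufficiently large, uniformly in $n$. The third term is handled directly by L\'evy's continuity theorem applied to the outer limit $W_N \xrightarrow{d} W$ in assumption (1): for $N$ large enough, $|\varphi_{W_N}(s) - \varphi_W(s)|$ is arbitrarily small. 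The middle term is handled, for the $N$ now fixed, by the inner limit $W_{n,N} \xrightarrow{d}_n W_N$ in assumption (1): for $n$ large enough, $|\varphi_{W_{n,N}}(s) - \varphi_{W_N}(s)|$ is arbitrarily small.

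Combining these three estimates in the standard $\varepsilon/3$ fashion gives $\varphi_{X_n}(s) \to \varphi_W(s)$ for every $s \in \mathbb{R}$, so L\'evy's continuity theorem finishes the argument. There is no genuine obstacle in this proof; the only subtlety is that one must choose $N$ first (so that the first term is globally small in $n$, which crucially uses that assumption (2) holds \emph{uniformly in} $n$), and only afterwards let $n \to \infty$ with $N$ fixed. If the uniformity were dropped, the first term could not be controlled and the argument would break down.
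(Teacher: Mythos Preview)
Your proof is correct. The $\varepsilon/3$ argument via characteristic functions is sound, and you correctly highlight that the uniformity in assumption (2) is exactly what allows $N$ to be fixed first, independently of $n$.

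The paper, however, does not actually prove this lemma: it simply remarks that the result follows from \cite[Theorem 4.28]{KallenbergFoundProb} (a general approximation result for convergence in distribution in metric spaces, phrased in terms of convergence in probability of the approximating sequence). Your route is genuinely different in that it is self-contained and elementary, working directly with characteristic functions on $\mathbb{R}$ and exploiting the Lipschitz bound $|e^{isx}-e^{isy}|\le |s||x-y|$ to convert the $L^2$ control from assumption (2) into a characteristic-function estimate. The citation-based approach has the advantage of applying in more general state spaces and under the weaker hypothesis of convergence in probability rather than in $L^2$; your approach has the advantage of being transparent and avoiding any external reference, at the modest cost of using the specific structure of real-valued random variables and the centering hypothesis.
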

This result follows e.g.~from \cite[Theorem 4.28]{KallenbergFoundProb}. 
We will apply Lemma \ref{lemma:approx} to variables $X_n$ defined by
\begin{equation*}
    \frac{\log\vert\aut{\mathcal{P}_n}\vert - \E(\log\vert\aut{\mathcal{P}_n}\vert)}{\sqrt{n}} , 
\end{equation*}
and $W_{n, N}$ being the, similarly normalized, random variable for the additive functional $F^{\leq N}(T)$, defined by having the toll function:
\begin{equation*}
    f^{\leq N}(T) = \sum_{B\in\mathcal{B}_I(T)} I(\mathrm{mult}(B)\leq N) \log(\mathrm{mult}(B)!) .
\end{equation*}
We note that $F(T) - F^{\leq N}(T) = F^{>N}(T)$ for an additive functional defined by
\begin{equation*}
    f^{>N}(T) = \sum_{B\in\mathcal{B}_I(T)} I(\mathrm{mult}(B) > N) \log(\mathrm{mult}(B)!) , 
\end{equation*}
so that we will, in fact, be interested in $\Var(F^{>N}(T_n))$ for the second condition of Lemma \ref{lemma:approx}. By straightforward modifications of \eqref{eq:GenFuncPolya}, we can define the generating functions 
\begin{equation*}
P^{\leq N}(x, t) = \sum_{T\in\mathcal{P}} e^{tF^{\leq N}(T)}x^{\vert T\vert}
\end{equation*}
and
\begin{equation*}
P^{> N}(x, t) = \sum_{T\in\mathcal{P}} e^{tF^{> N}(T)}x^{\vert T\vert}
\end{equation*}
for the corresponding cut-off functionals.

\subsection{Mean and variance}
We can now derive moments for the additive functionals $F, F^{\leq N}, F^{>N}$ with the help of generating functions and singularity analysis. The calculations are essentially the same in all cases so, to simplify the exposition, we perform them only for $F$ and indicate in the end how the results differ.

Due to general principles of generating functions, studying the mean and variance corresponds to studying $P_t(x, 0)$ and $P_{tt}(x, 0)$. According to calculations for general additive functionals from \cite{MR3318048}, we can write
\begin{equation}
\label{eq:PolyaFirstDerivative1}
  P_t(x, 0) =   xP_x(x, 0) \frac{\sum_T f(T)x^{\vert T\vert} + P(x, 0)\sum_{k\geq 2} P_t(x^k, 0)}{P(x, 0)(1+\sum_{k\geq 2} x^kP_x(x^k, 0))}  ,
  \end{equation}
  and
\begin{multline}
\label{eq:PolyaSecondDerivative1}
  P_{tt}(x, 0)  =   \frac{xP_x(x, 0)}{P(x, 0)(1+\sum_{k\geq 2}x^k P_x(x^k, 0))} \Bigg( P(x, 0) \Big(\sum_{k\geq1} P_t(x^k, 0)\Big)^2 \\ 
   + P(x, 0)\sum_{k\geq2} k P_{tt}(x^k, 0) + \sum_{T} x^{\vert T\vert} f(T) (2F(T)-f(T))\Bigg) , 
\end{multline}
for the first and second derivative. To perform singularity analysis, we must first find singular expansions for these expressions. To this end, we study the sums involved in them separately. 

Recall that $\rho_p= 0.33832\ldots$ is the dominant singularity of $P(x) = P(x,0)$.
Using the facts that $\rho_p<1$ so that $\rho_p^m<\rho_p$ for $m\geq 2$ and that $\log\vert\aut{T}\vert = O(\vert T\vert\log\vert T\vert )$, we see that the derivatives involving higher powers of $x$ are analytic in a larger region than $P(x, 0)$. Now, note that we can rewrite
\begin{equation*}
    2F(T)-f(T) = 2 \sum_{B\in \mathcal{B}(T)} F(B) + f(T) , 
\end{equation*}
so that it is enough to study $\sum x^{\vert T\vert}f(T)\sum F(B)$ and $\sum x^{\vert T\vert}f(T)^2$, as well as $\sum x^{\vert T\vert}f(T)$. We will now show that we can factor each of these expressions as $P(x, 0)$ times some function that is analytic in a larger radius than $\rho_p$. For the sum in the expression for the mean, we have
\begin{align*}
\label{eq:polyaInfSum1}
    \sum_T x^{\vert T\vert} & f(T) \\
    &= \sum_T x^{\vert T\vert} \sum_{B\in\mathcal{B}_I(T)} \log(\mathrm{mult}(B)!) 
    = \sum_{B\in\mathcal{P}} \sum_{m=1}^\infty \log(m!) \sum_{T:\mathrm{mult}(B)=m} x^{\vert T\vert} \\
    &= \sum_{B\in\mathcal{P}} \sum_{m=1}^\infty \log(m!) x^{m\vert B\vert} (P(x, 0) - x^{\vert B\vert}P(x, 0)) \\ 
    &= P(x) \sum_{B\in\mathcal{P}} \sum_{m=1}^\infty \log(m!) x^{m\vert B\vert} (1 - x^{\vert B\vert}) 
    = P(x)\sum_B \sum_{m=2}^\infty  \log(m) x^{m\vert B\vert}, 
\end{align*}
where we note that $P(x, 0)-x^{\vert B\vert}P(x, 0)$ equals the generating function for P\'olya trees without $B$ as a root branch. 
By taking absolute values, we can now bound
\begin{equation*}
   \sum_B \sum_{m=2}^\infty   \log(m) \vert x\vert^{m\vert B\vert} = O(\sum_B \vert x\vert^{2\vert B\vert}),
\end{equation*}
as long as $\vert x\vert < 1$. The extra power of 2 means that the sum converges for $\vert x\vert<\sqrt{\rho_p}<1$, so by the Weierstrass $M$-test, we have analyticity in a larger region than for the original generating function $P(x)$.

For the sum involving $\sum F(B)$, we have
\begin{multline*}
    \sum_T x^{\vert T\vert} \left(\sum_{B\in\mathcal{B}_I(T)} \log(\mathrm{mult}(B)!)\right) \left(\sum_{B\in\mathcal{B}_I(T)} \mathrm{mult}(B) F(B)\right) \\
    = \sum_{B\in\mathcal{P}} F(B) \sum_{m=1}^\infty m\log(m!) \sum_{T:\mathrm{mult}(B)=m} x^{\vert T\vert} \\
    + \sum_{\substack{B_1, B_2\in\mathcal{P}:\\B_1\neq B_2}} \sum_{m_1, m_2\geq 1} \log(m_1!)m_2 F(B_2) \sum_{\substack{T:\mathrm{mult}(B_1)=m_1\\\mathrm{mult}(B_2)=m_2}} x^{\vert T\vert}.
\end{multline*}
Using the fact that $\sum_B F(B)x^{m\vert B\vert} = P_t(x^m, 0)$ and performing calculations similar to above, the first sum can be seen to be
\begin{equation*}
    P(x, 0)\sum_{m=2}^\infty \log(m!m^{m-1}) P_t(x^m, 0) , 
\end{equation*}
where the sum is analytic in a larger region than the original function. To deal with the other sum, we first rewrite
\begin{equation*}
    \sum_{\substack{T:\mathrm{mult}(B_1)=m_1\\\mathrm{mult}(B_2)=m_2}} x^{\vert T\vert} = P(x, 0) x^{m_1\vert B_1\vert}(1-x^{\vert B_1\vert}) x^{m_2\vert B_2\vert}(1-x^{\vert B_2\vert}) .
\end{equation*}
Then, we note that
\begin{multline*}
    \sum_{\substack{B_1:\\B_1\neq B_2}} F(B_1) \sum_{m_1=1}^\infty m_1x^{m_1\vert B_1\vert}(1-x^{\vert B_1\vert}) \\
    = \sum_{m_1=1}^\infty \sum_{\substack{B_1:\\B_1\neq B_2}} F(B_1)x^{m_1\vert B_1\vert} 
    =  \sum_{j=1}^\infty P_t(x^{j}, 0) - \sum_{j=1}^\infty F(B_2)x^{j\vert B_2\vert}.
\end{multline*}
These observations let us rewrite the larger sum as
\begin{multline*}
    P(x, 0)\left(\sum_{j=1}^\infty P_t(x^{j}, 0)\right)\sum_{m=1}^\infty \log(m!) x^{m\vert B\vert}(1-x^{\vert B\vert}) \\
    - P(x, 0) \sum_B F(B)\sum_{m=1}^\infty \log(m!) x^{m\vert B\vert}(1-x^{\vert B\vert})\sum_{j=1}^\infty x^{j\vert B\vert} .
\end{multline*}
The first of these two sums can now be dealt with using calculations identical to those performed earlier, and further simplifications for the second sum allow us to rewrite the whole expression as $P(x, 0)$ multiplied by
\begin{equation*}
     \Bigg(\left( P_t(x, 0) + \sum_{m=2}^\infty P_t(x^m, 0)\right) 
     \sum_B \sum_{m=2}^\infty  \log(m) x^{m\vert B\vert}  -\sum_m \log(m!)P_t(x^{m+1}, 0)\Bigg) .
\end{equation*}
The sum $\sum x^{\vert T\vert}f(T)^2$ can be dealt with using similar techniques and we conclude that we can rewrite \eqref{eq:PolyaFirstDerivative1} and \eqref{eq:PolyaSecondDerivative1} as
\begin{align}
\label{eq:PolyaDerivatives2}
  P_t(x, 0) &=   xP_x(x, 0) \frac{H(x) + \sum_{k\geq 2} P_t(x^k, 0)}{(1+\sum_{k\geq 2} x^kP_x(x^k, 0))}  \nonumber, \\
  P_{tt}(x, 0) &=   \frac{xP_x(x, 0)}{(1+\sum_{k\geq 2}x^k P_x(x^k, 0))} \Bigg( \Big(P_t(x, 0) + \sum_{k\geq2} P_t(x^k, 0)\Big)^2 \nonumber \\ 
  & \qquad + \sum_{k\geq2} k P_{tt}(x^k, 0) + 2(P_t(x, 0)H(x) + K(x)) + L(x)\Bigg) , 
\end{align}
for functions $H(x)$, $K(x)$ and $L(x)$ that are analytic in a larger region than $P(x, 0)$. This puts us in a situation where we can perform singularity analysis to find the moments. Numerical computations yield $\mu=0.1373423\ldots$ and $\sigma^2=0.1967696\ldots$.

If we instead consider $F^{\leq N}(T)$ or $F^{>N}(T)$, the extra indicator function introduced in the expression will carry trough the calculations and affect the indices in the sums. In the sums with index $m$ above, we will sum up to $m=N$ in the first case and sum from $m=N+1$ to infinity in the second. In particular, for $F^{>N}(T)$, the corresponding analytic functions $H^{>N}(x)$, $K^{>N}(x)$ and $L^{>N}(x)$ will converge to zero within their region of convergence, if we let $N\to\infty$.

\subsection{Asymptotic normality for $\log\vert\aut{\mathcal{P}_n}\vert$}
If we introduce a cut-off and study $F^{\leq N}$ instead of $\log\vert\aut{T}\vert$, we can perform calculations completely analogous to the ones we did for \eqref{eq:PolyaEqAut} to obtain the functional equation
\begin{equation}\label{eq:PN}
    P^{\leq N}(x, t) = x \exp \left( P^{\leq N}(x, t) + \sum_{j=2}^\infty \frac{c_N(j, t)}{j}P^{\leq N}(x^j, jt)\right) , 
\end{equation}
where we define
\begin{equation*}
    c_N(j, t) = j \sum_{\lambda\vdash j}  \frac{(-1)^{\vert\lambda\vert-1}}{\vert\lambda\vert}  \binom{\vert\lambda\vert}{\lambda_1, \lambda_2, \ldots} \left(\prod_{n=1}^{N} n!^{\lambda_n t}\right) .
\end{equation*}

Except for the root, every vertex in the tree occurs as the child of some other vertex. This implies that it contributes to exactly one of the terms 
\begin{equation*}
    I(\mathrm{mult}(B)\leq N) \log(\mathrm{mult}(B)!) ,
\end{equation*}
in the expansion of $F^{\leq N}(T)$. Thus, as a crude upper bound, each of the $n$ vertices contributes at most $\log N!$ to the total value of the additive functional. Therefore, we see that $F^{\leq N}(T)=O(n)$ and, if we restrict to $\vert t\vert<\delta$ for some suitable $\delta>0$, 
\begin{equation*}
\label{eq:PolyaFuncEq}
    G(x, y, t) := x \exp \left( y + \sum_{j=2}^\infty \frac{c_N(j, t)}{j}P^{\leq N}(x^j, jt)\right)
\end{equation*}
is analytic in a region containing $x=\rho_p$, $y=\tau$. Theorem 2.23 in \cite{DrmotaRandTrees} now gives asymptotic normality for $F^{\leq N}(T)$, i.e. $W_N \sim \mathrm{N}(0, \sigma^2_N)$ for some constant $\sigma^2_N$.

Note that 
\begin{equation*}
\Var(X_n-W_{n, N}) = \frac{\Var(F(\mathcal{P}_n) - F^{\leq N}(\mathcal{P}_n))}{n} .
\end{equation*}
Since  $F(T) - F^{\leq N}(T) = F^{>N}(T)$, we want to show that $\Var(F^{>N}(T_n))/n\to 0$ when $N\to\infty$ which leads us to study $P^{>N}_{tt}(x, t)$. The reasoning from the last section shows that coefficients in Taylor expansions of $H^{>N}(x)$, $K^{>N}(x)$ and $L^{>N}(x)$ around $x=\rho_p$ go to zero as $N\to\infty$. By dominated convergence, the same is true for the expressions 
\begin{equation*}
\sum P_t(x^k, 0) \textrm{ and } \sum k P_{tt}(x^k, 0), 
\end{equation*}
since all terms of $P_t$ and $P_{tt}$ involve powers of $F^{>N}(T)$ and these go to zero for any fixed tree as $N\to\infty$. By studying \eqref{eq:PolyaDerivatives2} (except with $P^{>N}_{tt}(x, t)$ instead of $P_{tt}(x, t)$) we see that all the coefficients in the singular expansion of $P^{>N}_{tt}(x, t)$ depend on these quantities. Therefore, the expansion must be of the type 
\begin{equation*}
    a_N \left(1-\frac{x}{\rho_p}\right)^{-3/2} + b_N \left(1-\frac{x}{\rho_p}\right)^{-1} + c_N \left(1-\frac{x}{\rho_p}\right)^{-1/2} + O_N(1) , 
\end{equation*}
where each coefficient, as well as the error, goes to zero with $N$.

Performing singularity analysis, where we also subtract $\E(F^{>N}(\mathcal{P}_n))^2$ to get the variance, and dividing by $n$, gives us that
\begin{equation*}
    \Var(X_n - W_{n, N}) = \gamma^2_N + O_N\left(\frac{1}{n}\right) , 
\end{equation*}
for some constant $\gamma_N$ that goes to $0$ as $N \to \infty$. Moreover, the $O$-term converges uniformly to zero. This implies that the variance of $X_n - W_{n, N}$ goes to zero, uniformly in $n$ so that the approximation lemma applies. Thus, we can conclude asymptotic normality for $\log\vert\aut{\mathcal{P}_n}\vert$ from the asymptotic normality of $F^{\leq N}(\mathcal{P}_n)$ and finish the proof.

\section{Automorphisms of unrooted trees}
\label{sec:Unrooted}
We show how to extend our results to unrooted versions of labeled trees and Pólya trees. Even though it is not clear what an unrooted version of a Galton--Watson tree is in general, some special cases can be dealt with using methods similar to the ones below, e.g. labeled unrooted binary trees.

\subsection{Unrooted labeled trees}
We can define unrooted labeled trees on the same probability space as rooted trees by taking a rooted tree and unrooting it. As there are exactly $n$ unique ways of rooting any labeled tree, this gives the uniform probability measure on unrooted trees, assuming that we started with the uniform measure on rooted trees. 

Now let $T$ be a rooted tree of size $n$ and $T_v$ be the tree rooted at the vertex~$v$. Note that $\aut{T_v}$ is the stabilizer 
of $v$ in $\aut{T}$. Thus, we have
\begin{equation*}
    1 \leq \frac{\vert \aut{T}\vert}{\vert \aut{T_v}\vert} 
    = \vert \textrm{Orbit of }v\vert \leq \vert T\vert , 
\end{equation*}
due to the orbit-stabilizer theorem \cite[Lemma 6.1]{MR2339282}. Taking logarithms and normalizing, we find that
\begin{equation}\label{eq:rooted-unrooted-ineq}
    0 \leq \frac{\log\vert \aut{T}\vert - \mu n}{\sqrt{n}} - \frac{\log\vert \aut{T_v}\vert - \mu n}{\sqrt{n}} \leq \frac{\log n}{\sqrt{n}} , 
\end{equation}
with $\mu$ being the mean constant for rooted labeled trees from Theorem \ref{thm:MainThmGW}.
If we let $X_n=\frac{\log\vert \aut{T}\vert - \mu n}{\sqrt{n}}$ and likewise $Y_n = \frac{\log\vert \aut{T_v}\vert - \mu n}{\sqrt{n}}$ for rooted trees, then we see that we have almost sure convergence of $X_n - Y_n$ to 0, and thus also convergence in probability. Slutsky's theorem together with the result for rooted trees now lets us conclude that
\begin{equation*}
    X_n = X_n-Y_n + Y_n \xrightarrow{d} N(0, \sigma^2) , 
\end{equation*}
with $\sigma^2$ also coming from the theorem for rooted trees.

\subsection{Unrooted P\'olya trees}
We derive an analog of equation \eqref{eq:ClassicalUnrooted} that takes the size of the automorphism group into account.
Let us first recall that a centroid of a tree is a vertex with the property that none of the components obtained by removing it contains more than half of the vertices. It is a classical result going back to Jordan~\cite{MR1579443} (see also e.g.~\cite[Ex. 6.21a]{MR2321240}) that every tree has either a unique centroid (which we then call a central vertex) or two centroids, connected by an edge (called a central edge). Centroid vertices are also characterized by the property that the sum of the distances to all other vertices is minimized.

A central edge that connects two isomorphic trees will be called a symmetry line and the term ``central edge'' will be reserved for edges between centroid vertices that are not symmetry lines. The difference between the automorphisms of rooted trees compared to unrooted trees is that in the latter case any automorphism must preserve edges but not necessarily the root. We now have a bijection between Pólya trees $\mathcal{P}$ and the union of unrooted trees $\mathcal{U}$ and pairs of Pólya trees $P_1, P_2 \in \mathcal{P}$ with $P_1\neq P_2$. Observe that for a rooted tree, there are four cases:
\begin{enumerate}
    \item The root is a central vertex. There is a bijection from such trees to unrooted trees with a central vertex and, furthermore, any automorphism must preserve a central vertex so that the two trees have the same group of automorphisms.
    \item \label{list:symmetry} The root is one endpoint of a symmetry line. We have a bijection between trees with a symmetry line where one of its endpoints is the root and unrooted trees with a symmetry line. We simply root the tree at one of the endpoints and note that we get the same rooted Pólya tree no matter which endpoint we choose. Any automorphism must preserve the central edge, but due to symmetry any automorphism of the rooted tree corresponds to two automorphisms of the unrooted version since we can map the endpoints of the symmetry line into each other. 
    \item The root is one endpoint of a central edge. First note that we have a bijection between unrooted trees with a central edge $\mathcal{U}_{ce}$ and pairs of rooted trees $\mathcal{P}_{ce}^p$ that, if joined by an edge at the roots, result in a tree with that edge as central. We now have a bijection between the union $\mathcal{U}_{ce}\cup\mathcal{P}_{ce}^p$ and rooted trees with a central edge where one of the two endpoints is the root. This can be seen by, in the former case, choosing one of the vertices of the central edge as the root. For the rooted trees in bijection with unrooted trees, we note that any automorphism of an unrooted tree must preserve the central edge, and as it is not a symmetry line this implies that it must fix the root. For rooted trees in bijection with pairs of rooted trees we note that the two trees must be different but have the same size implying that no additional symmetry can occur when joining them. In both cases, the size of the automorphism group of the rooted tree is the same size as its counterpart. 
    \item The root satisfies none of the above. Then one root branch contains strictly more than half of the vertices and the tree decomposes into an unordered pair of rooted trees, i.e., the large branch and the rest of the tree (including the root). As the trees have different sizes this makes the decomposition unique and the size of the automorphism group of the original tree is simply the product of the groups of the two subtrees.
\end{enumerate}
Let $U_c(x, t)$ be the generating function for unrooted trees with a central vertex and $U_e(x, t)$ be the generating function for unrooted trees with a central edge or symmetry line. Combining the observations from above, and translating it to the level of generating functions, we find that
\begin{equation*}
    P(x, t) = U_c(x, t) + U_e(x, t) - 2^tP(x^2, 2t) + P(x^2, 2t) + \frac{1}{2} P(x, t)^2 - \frac{1}{2}P(x^2, 2t)
\end{equation*}
where the two middle terms involving $P(x^2, 2t)$ are correction terms corresponding to point \ref{list:symmetry}. above and the last two terms count unordered pairs of distinct rooted trees. By noting that $U(x, t) = U_c(x, t) + U_e(x, t)$ and rearranging we get
\begin{equation*}
    U(x, t) = P(x, t)  - \frac{1}{2} P(x, t)^2 + \left(2^t - \frac{1}{2}\right)P(x^2, 2t)
\end{equation*}
which is enough to obtain moments for $\log\vert \aut{T}\vert$ and calculations show that the mean and variance constants are the same as for rooted trees.

To extend the results for rooted trees to a full central limit theorem we use the far-reaching result in \cite[Theorem 1.3]{MR3983790}. This theorem shows that the random unrooted tree $U_n$ on $n$ vertices is close to a tree $T_n$ obtained by identifying the roots of a rooted Pólya tree $\mathcal{P}_{K_n}$ (of random size $K_n$) and a tree $B_n$ of stochastically bounded size $\vert B_n\vert = n-K_n+1 = O_P(1)$. To be precise, the total variation distance between $U_n$ and $T_n$ is $O(e^{-cn})$ for a constant $c > 0$.

In other words, an unrooted tree essentially consists of a large rooted Pólya tree and something small. Thus, we have
\begin{equation}\label{eq:TVD}
    P\left(\frac{\log\vert\aut{U}_n\vert-\mu n }{\sqrt{n}}\leq a\right) = P\left(\frac{\log\vert\aut{T}_n\vert-\mu n}{\sqrt{n}}\leq a\right) + O(e^{-cn}).
\end{equation}
Moreover, if $\vert B_n \vert \leq M$ for some fixed $M$, then we have $\log \vert\aut{T_n}\vert = \log \vert\aut{\mathcal{P}_{K_n}}\vert + O(\log n)$ by the same argument that gave us~\eqref{eq:rooted-unrooted-ineq}, and consequently
\begin{align*}
    \frac{\log\vert\aut{T_n}\vert-\mu n }{\sqrt{n}} &= \frac{\log\vert\aut{\mathcal{P}_{K_n}}\vert-\mu n}{\sqrt{n}} + O \Big( \frac{\log n}{\sqrt{n}} \Big) \\
    &= \frac{\log\vert\aut{\mathcal{P}_{K_n}}\vert-\mu K_n}{\sqrt{K_n}} + O \Big( \frac{\log n}{\sqrt{n}} \Big).
\end{align*}
Since $\vert B_n \vert = n - K_n + 1$ is stochastically bounded, we see that
\begin{equation*}
\frac{\log\vert\aut{T_n}\vert-\mu n }{\sqrt{n}} - \frac{\log\vert\aut{\mathcal{P}_{K_n}}\vert-\mu K_n}{\sqrt{K_n}} \xrightarrow{p} 0.
\end{equation*}
So an application of Slutsky's theorem in combination with~\eqref{eq:TVD} and the results for rooted Pólya trees proves the central limit theorem for the size of the automorphism group in the case of unrooted trees. 

\section*{Acknowledgements}
This work was supported by the Knut and Alice Wallenberg Foundation.

The authors would like to thank Benedikt Stufler for helpful suggestions relating to Section \ref{sec:Unrooted}.

\nocite{label}

\bibliography{references}


\begin{thebibliography}{23}
\ifx \bisbn   \undefined \def \bisbn  #1{ISBN #1}\fi
\ifx \binits  \undefined \def \binits#1{#1}\fi
\ifx \bauthor  \undefined \def \bauthor#1{#1}\fi
\ifx \batitle  \undefined \def \batitle#1{#1}\fi
\ifx \bjtitle  \undefined \def \bjtitle#1{#1}\fi
\ifx \bvolume  \undefined \def \bvolume#1{\textbf{#1}}\fi
\ifx \byear  \undefined \def \byear#1{#1}\fi
\ifx \bissue  \undefined \def \bissue#1{#1}\fi
\ifx \bfpage  \undefined \def \bfpage#1{#1}\fi
\ifx \blpage  \undefined \def \blpage #1{#1}\fi
\ifx \burl  \undefined \def \burl#1{\textsf{#1}}\fi
\ifx \doiurl  \undefined \def \doiurl#1{\url{https://doi.org/#1}}\fi
\ifx \betal  \undefined \def \betal{\textit{et al.}}\fi
\ifx \binstitute  \undefined \def \binstitute#1{#1}\fi
\ifx \binstitutionaled  \undefined \def \binstitutionaled#1{#1}\fi
\ifx \bctitle  \undefined \def \bctitle#1{#1}\fi
\ifx \beditor  \undefined \def \beditor#1{#1}\fi
\ifx \bpublisher  \undefined \def \bpublisher#1{#1}\fi
\ifx \bbtitle  \undefined \def \bbtitle#1{#1}\fi
\ifx \bedition  \undefined \def \bedition#1{#1}\fi
\ifx \bseriesno  \undefined \def \bseriesno#1{#1}\fi
\ifx \blocation  \undefined \def \blocation#1{#1}\fi
\ifx \bsertitle  \undefined \def \bsertitle#1{#1}\fi
\ifx \bsnm \undefined \def \bsnm#1{#1}\fi
\ifx \bsuffix \undefined \def \bsuffix#1{#1}\fi
\ifx \bparticle \undefined \def \bparticle#1{#1}\fi
\ifx \barticle \undefined \def \barticle#1{#1}\fi
\bibcommenthead
\ifx \bconfdate \undefined \def \bconfdate #1{#1}\fi
\ifx \botherref \undefined \def \botherref #1{#1}\fi
\ifx \url \undefined \def \url#1{\textsf{#1}}\fi
\ifx \bchapter \undefined \def \bchapter#1{#1}\fi
\ifx \bbook \undefined \def \bbook#1{#1}\fi
\ifx \bcomment \undefined \def \bcomment#1{#1}\fi
\ifx \oauthor \undefined \def \oauthor#1{#1}\fi
\ifx \citeauthoryear \undefined \def \citeauthoryear#1{#1}\fi
\ifx \endbibitem  \undefined \def \endbibitem {}\fi
\ifx \bconflocation  \undefined \def \bconflocation#1{#1}\fi
\ifx \arxivurl  \undefined \def \arxivurl#1{\textsf{#1}}\fi
\csname PreBibitemsHook\endcsname

\bibitem[\protect\citeauthoryear{Aigner}{2007}]{MR2339282}
\begin{bbook}
\bauthor{\bsnm{Aigner}, \binits{M.}}:
\bbtitle{A Course in Enumeration}.
\bsertitle{Graduate Texts in Mathematics},
vol. \bseriesno{238}.
\bpublisher{Springer},
\blocation{Berlin}
(\byear{2007})
\end{bbook}
\endbibitem

\bibitem[\protect\citeauthoryear{Babai}{1995}]{MR1373683}
\begin{bchapter}
\bauthor{\bsnm{Babai}, \binits{L.}}:
\bctitle{Automorphism groups, isomorphism, reconstruction}.
In: \bbtitle{Handbook of Combinatorics, {V}ol. 2},
pp. 1447--1540.
\bpublisher{Elsevier},
\blocation{Amsterdam}
(\byear{1995})
\end{bchapter}
\endbibitem

\bibitem[\protect\citeauthoryear{B\'{o}na and Flajolet}{2009}]{MR2582703}
\begin{barticle}
\bauthor{\bsnm{B\'{o}na}, \binits{M.}},
\bauthor{\bsnm{Flajolet}, \binits{P.}}:
\batitle{Isomorphism and symmetries in random phylogenetic trees}.
\bjtitle{J. Appl. Probab.}
\bvolume{46}(\bissue{4}),
1005--1019
(\byear{2009}).
\doiurl{10.1239/jap/1261670685}
\end{barticle}
\endbibitem

\bibitem[\protect\citeauthoryear{Drmota}{2009}]{DrmotaRandTrees}
\begin{bbook}
\bauthor{\bsnm{Drmota}, \binits{M.}}:
\bbtitle{Random Trees}.
\bpublisher{Springer},
\blocation{Vienna}
(\byear{2009}).
\doiurl{10.1007/978-3-211-75357-6}
\end{bbook}
\endbibitem

\bibitem[\protect\citeauthoryear{Holmgren and Janson}{2015}]{MR3311217}
\begin{barticle}
\bauthor{\bsnm{Holmgren}, \binits{C.}},
\bauthor{\bsnm{Janson}, \binits{S.}}:
\batitle{Limit laws for functions of fringe trees for binary search trees and
  random recursive trees}.
\bjtitle{Electron. J. Probab.}
\bvolume{20},
no. 4, 51 pp.
(\byear{2015}).
\doiurl{10.1214/EJP.v20-3627}
\end{barticle}
\endbibitem

\bibitem[\protect\citeauthoryear{Holmgren et~al.}{2017}]{MR3690263}
\begin{barticle}
\bauthor{\bsnm{Holmgren}, \binits{C.}},
\bauthor{\bsnm{Janson}, \binits{S.}},
\bauthor{\bsnm{\v{S}ileikis}, \binits{M.}}:
\batitle{Multivariate normal limit laws for the numbers of fringe subtrees in
  {$m$}-ary search trees and preferential attachment trees}.
\bjtitle{Electron. J. Combin.}
\bvolume{24}(\bissue{2}),
Paper No. 2.51, 49 pp.
(\byear{2017}).
\doiurl{10.37236/6374}
\end{barticle}
\endbibitem

\bibitem[\protect\citeauthoryear{Isaev et~al.}{2022}]{MR4472288}
\begin{barticle}
\bauthor{\bsnm{Isaev}, \binits{M.}},
\bauthor{\bsnm{Southwell}, \binits{A.}},
\bauthor{\bsnm{Zhukovskii}, \binits{M.}}:
\batitle{Distribution of tree parameters by martingale approach}.
\bjtitle{Combin. Probab. Comput.}
\bvolume{31}(\bissue{5}),
737--764
(\byear{2022}).
\doiurl{10.1017/s0963548321000523}
\end{barticle}
\endbibitem

\bibitem[\protect\citeauthoryear{Janson}{2006}]{janson05}
\begin{barticle}
\bauthor{\bsnm{Janson}, \binits{S.}}:
\batitle{Random cutting and records in deterministic and random trees}.
\bjtitle{Random Struct. Algorithms}
\bvolume{29}(\bissue{2}),
139--179
(\byear{2006}).
\doiurl{10.1002/rsa.20086}
\end{barticle}
\endbibitem

\bibitem[\protect\citeauthoryear{Janson}{2016}]{MR3432572}
\begin{barticle}
\bauthor{\bsnm{Janson}, \binits{S.}}:
\batitle{Asymptotic normality of fringe subtrees and additive functionals in
  conditioned {G}alton-{W}atson trees}.
\bjtitle{Random Struct. Algorithms}
\bvolume{48}(\bissue{1}),
57--101
(\byear{2016}).
\doiurl{10.1002/rsa.20568}
\end{barticle}
\endbibitem

\bibitem[\protect\citeauthoryear{Jordan}{1869}]{MR1579443}
\begin{barticle}
\bauthor{\bsnm{Jordan}, \binits{C.}}:
\batitle{Sur les assemblages de lignes}.
\bjtitle{J. Reine Angew. Math.}
\bvolume{70},
185--190
(\byear{1869}).
\doiurl{10.1515/crll.1869.70.185}
\end{barticle}
\endbibitem

\bibitem[\protect\citeauthoryear{Kallenberg}{1997}]{KallenbergFoundProb}
\begin{bbook}
\bauthor{\bsnm{Kallenberg}, \binits{O.}}:
\bbtitle{Foundations of Modern Probability}.
\bpublisher{Springer},
\blocation{New York}
(\byear{1997})
\end{bbook}
\endbibitem

\bibitem[\protect\citeauthoryear{Lov\'{a}sz}{2007}]{MR2321240}
\begin{bbook}
\bauthor{\bsnm{Lov\'{a}sz}, \binits{L.}}:
\bbtitle{Combinatorial Problems and Exercises},
\bedition{2}nd edn.
\bpublisher{AMS Chelsea Publishing},
\blocation{Providence, RI}
(\byear{2007}).
\doiurl{10.1090/chel/361}
\end{bbook}
\endbibitem

\bibitem[\protect\citeauthoryear{Matthews}{2017}]{DMatthewsThesis}
\begin{botherref}
\oauthor{\bsnm{Matthews}, \binits{D.}}:
Automorphisms of random recursive trees.
PhD thesis,
University of Southhampton
(2017).
\url{https://eprints.soton.ac.uk/415900}
\end{botherref}
\endbibitem

\bibitem[\protect\citeauthoryear{McKeon}{1991}]{MR1170829}
\begin{bchapter}
\bauthor{\bsnm{McKeon}, \binits{K.A.}}:
\bctitle{The expected number of symmetries in locally-restricted trees. {I}}.
In: \bbtitle{Graph Theory, Combinatorics, and Applications. {V}ol. 2
  ({K}alamazoo, {MI}, 1988)}.
\bsertitle{Wiley-Intersci. Publ.},
pp. 849--860.
\bpublisher{Wiley},
\blocation{New York}
(\byear{1991})
\end{bchapter}
\endbibitem

\bibitem[\protect\citeauthoryear{McKeon}{1996}]{MR1392319}
\begin{barticle}
\bauthor{\bsnm{McKeon}, \binits{K.A.}}:
\batitle{The expected number of symmetries in locally restricted trees. {II}}.
\bjtitle{Discrete Appl. Math.}
\bvolume{66}(\bissue{3}),
245--253
(\byear{1996}).
\doiurl{10.1016/0166-218X(94)00164-9}
\end{barticle}
\endbibitem

\bibitem[\protect\citeauthoryear{Olsson}{2022}]{ProbTreeIso}
\begin{botherref}
\oauthor{\bsnm{Olsson}, \binits{C.}}:
The probability of random trees being isomorphic.
arXiv
(2022).
\doiurl{10.48550/ARXIV.2205.06625}
\end{botherref}
\endbibitem

\bibitem[\protect\citeauthoryear{Olsson and
  Wagner}{2022}]{olsson_et_al:LIPIcs.AofA.2022.16}
\begin{bchapter}
\bauthor{\bsnm{Olsson}, \binits{C.}},
\bauthor{\bsnm{Wagner}, \binits{S.}}:
\bctitle{{Automorphisms of Random Trees}}.
In: \beditor{\bsnm{Ward}, \binits{M.D.}} (ed.)
\bbtitle{33rd International Conference on Probabilistic, Combinatorial and
  Asymptotic Methods for the Analysis of Algorithms (AofA 2022)}.
\bsertitle{Leibniz International Proceedings in Informatics (LIPIcs)},
vol. \bseriesno{225},
pp. {16:1}--{16:16}.
\bpublisher{Schloss Dagstuhl -- Leibniz-Zentrum f{\"u}r Informatik},
\blocation{Dagstuhl, Germany}
(\byear{2022}).
\doiurl{10.4230/LIPIcs.AofA.2022.16}
\end{bchapter}
\endbibitem

\bibitem[\protect\citeauthoryear{Ralaivaosaona and Wagner}{2019}]{MR3984050}
\begin{barticle}
\bauthor{\bsnm{Ralaivaosaona}, \binits{D.}},
\bauthor{\bsnm{Wagner}, \binits{S.}}:
\batitle{A central limit theorem for additive functionals of increasing trees}.
\bjtitle{Combin. Probab. Comput.}
\bvolume{28}(\bissue{4}),
618--637
(\byear{2019}).
\doiurl{10.1017/s0963548318000585}
\end{barticle}
\endbibitem

\bibitem[\protect\citeauthoryear{Ralaivaosaona et~al.}{2020}]{almostLocal_2020}
\begin{barticle}
\bauthor{\bsnm{Ralaivaosaona}, \binits{D.}},
\bauthor{\bsnm{Šileikis}, \binits{M.}},
\bauthor{\bsnm{Wagner}, \binits{S.}}:
\batitle{A central limit theorem for almost local additive tree functionals}.
\bjtitle{Algorithmica}
\bvolume{82}(\bissue{3}),
642--679
(\byear{2020}).
\doiurl{10.1007/s00453-019-00622-4}
\end{barticle}
\endbibitem

\bibitem[\protect\citeauthoryear{Stufler}{2019}]{MR3983790}
\begin{barticle}
\bauthor{\bsnm{Stufler}, \binits{B.}}:
\batitle{The continuum random tree is the scaling limit of unlabeled unrooted
  trees}.
\bjtitle{Random Structures Algorithms}
\bvolume{55}(\bissue{2}),
496--528
(\byear{2019}).
\doiurl{10.1002/rsa.20833}
\end{barticle}
\endbibitem

\bibitem[\protect\citeauthoryear{Wagner}{2015}]{MR3318048}
\begin{barticle}
\bauthor{\bsnm{Wagner}, \binits{S.}}:
\batitle{Central limit theorems for additive tree parameters with small toll
  functions}.
\bjtitle{Combin. Probab. Comput.}
\bvolume{24}(\bissue{1}),
329--353
(\byear{2015}).
\doiurl{10.1017/S0963548314000443}
\end{barticle}
\endbibitem

\bibitem[\protect\citeauthoryear{Yu}{2012}]{LYUThesis}
\begin{botherref}
\oauthor{\bsnm{Yu}, \binits{L.}}:
Automorphisms of random trees.
PhD thesis,
Drexel University
(2012).
\url{https://idea.library.drexel.edu/islandora/object/idea\%3A4004/}
\end{botherref}
\endbibitem

\end{thebibliography}

\end{document}